\newtheorem{theorem}{Theorem}[section]
\newtheorem{lemma}[theorem]{Lemma}
\newtheorem{corollary}[theorem]{Corollary}
\theoremstyle{definition}
\newtheorem{remark}[theorem]{Remark}
\newtheorem{example}[theorem]{Example}
\begin{document}

\title{Condition Numbers of Hessenberg Companion Matrices}
\author{Michael Cox} 
\address{Unit 202 - 133 Herkimer Street, Hamilton, ON, L8P 2H3, Canada}
\email{michael.m.cox@outlook.com}

\author{Kevin N. Vander Meulen}
\thanks{Research of Vander Meulen was supported in part 
by NSERC Discovery Grant
2022-05137.}
\address{Department of Mathematics\\
Redeemer University College, Ancaster, ON, L9K 1J4, Canada}
\email{kvanderm@redeemer.ca}

\author{Adam Van Tuyl}
\thanks{Research of Van Tuyl was supported in part by NSERC Discovery Grant 2019-05412}
\address{Department of Mathematics and Statistics\\
McMaster University, Hamilton, ON, L8S 4L8, Canada}
\email{vantuyl@math.mcmaster.ca}

\author{Joseph Voskamp}
\thanks{Research of Voskamp was supported in part by NSERC USRA 504279.}
\address{Department of Mathematics and Statistics\\
McMaster University, Hamilton, ON, L8S 4L8, Canada}
\email{voskampj@mcmaster.ca}

\date{\today}

\keywords{companion matrix, Fiedler companion
matrix, condition number,
generalized companion matrix} 

\subjclass[2010]{15A12, 15B99}
\begin{abstract}
The Fiedler matrices are a large class of companion
matrices that include the well-known Frobenius companion
matrix. The Fiedler matrices are part of a larger class
of companion matrices that can be characterized with
a Hessenberg form. In this paper, we demonstrate that
the Hessenberg form of the Fiedler companion matrices
provides a straight-forward way to compare the  
condition numbers of these matrices. We also show
that there are other companion matrices which can 
provide a much smaller condition number than any
Fiedler companion matrix. We finish by exploring the condition number of a class of matrices obtained from perturbing a Frobenius companion matrix while preserving the characteristic polynomial. 
\end{abstract}

\maketitle

\section{Introduction}

The Frobenius companion matrix is a template that provides a matrix with a prescribed characteristic polynomial. 
More recently, it was discovered that the Frobenious companion matrix belongs to a larger class of Fiedler companion matrices~\cite{F}, which in turn is a subset of the intercyclic companion matrices~\cite{EKSV}. Other recent templates include  nonsparse companion matrices \cite{DFGV} and  generalized companion matrices \cite{GSSV}. 

The Frobenius companion matrix is employed in algorithms that use matrix methods to determine roots of polynomials, but this matrix is not always well-conditioned \cite{dTDP}. Recent work \cite{dTDP} has explored under what circumstances other Fielder companion matrices can have a better condition number than the Frobenius matrix, with respect to the Frobenius norm. After covering background details in Section~\ref{sec:back},  we use a Hessenberg characterization of the Fiedler companion matrices in Section~\ref{sec:Hess} to provide a concise argument for the condition number of a Fielder companion matrix. The characterization allows us to avoid 
dealing with the particular permutation in Fiedler's construction of companion matrices~\cite{F}, as well as
 associated concepts around consecutions and inversions developed in~\cite{dTDP}.
In Section~\ref{sec:stripe}, we provide some examples of non-Fiedler companion matrices that demonstrate that there are intercyclic companion matrices that have a smaller condition number than any Fielder companion matrix for some specific polynomials. In Section~\ref{sec:gen}, we provide a method for constructing a generalized companion matrix that, in some cases, can improve on the condition number of any Fiedler companion matrix.


\section{Technical definitions and background}\label{sec:back}

In this section we recall the relevant 
background on companion matrices and condition numbers that will be
required throughout the paper.

Let $n \geq 2$ be an integer
and $p(x)=x^n+c_{n-1}x^{n-1}+c_{n-2}x^{n-2}+\cdots+c_0$.
A \emph{companion matrix}
to $p(x)$ 
is  an $n\times n$ matrix $A$ 
over $\mathbb{R}[c_0,\ldots,c_{n-1}]$ 
such that the characteristic polynomial
of $A$ is $p(x)$.
A \emph{unit sparse companion matrix}
to $p(x)$ 
is a companion 
matrix $A$ 
that has $n-1$ entries equal to one, 
$n$ variable entries $-c_0,\ldots,-c_{n-1}$, 
and the remaining 
$n^2-2n+1$ entries equal to zero.
The unit sparse companion matrix of the form
$$
\begin{bmatrix}
0 & 1 & 0 & \cdots & 0 & 0 \\
0 & 0 & 1 & \cdots & 0 & 0 \\
0 & 0 & 0 & \cdots & 1 &  0 \\
\vdots & \vdots & \vdots & \cdots & 0 & 1 \\
-c_0 & -c_1 & -c_2 & \cdots& -c_{n-2} & -c_{n-1} \end{bmatrix}
$$
is called the \emph{Frobenius companion matrix} of $p(x)$. Sparse companion matrices have also been called intercyclic companion matrices due to the structure of the
digraph associated with
the matrix (see \cite{VV} and \cite{EKSV} for details).

The matrices in Figure~\ref{fig:excom} are examples of 
unit sparse companion
matrices to 
$p(x)=x^4+c_3x^3+c_2x^2+c_1x+c_0$. The first 
matrix in Figure \ref{fig:excom} is a Frobenius companion matrix.  The matrices
in Figure~\ref{fig:gencompmat} are also companion
matrices to $p(x)$, but they are not unit
sparse since not every nonzero variable entry is the negative of a single coefficient of $p(x)$. 
Note that in the last matrix,
the value of $a$ can be any real number;
when $a=0$, then this matrix becomes a unit
sparse companion matrix.
\begin{figure}
$$\left[\begin{array}{rrrr}
		0&1&0&0\\
		0&0&1&0\\
		0&0&0&1\\
		-c_0&-c_1&-c_2&-c_3
		\end{array}\right], \quad
		\left[\begin{array}{rrrr}
		0&1&0&0\\
		0&-c_3&1&0\\
		0&-c_2&0&1\\
		-c_0&-c_1&0&0
		\end{array}\right], \quad
		\left[\begin{array}{rrrr}
		0&1&0&0\\
		-c_2&-c_3&1&0\\
		0&0&0&1\\
		-c_0&-c_1&0&0
		\end{array}\right].
		$$
\caption{Some $4\times 4$ unit sparse companion matrices.} \label{fig:excom}
\end{figure}

\begin{figure}
$$		\left[\begin{array}{rrrr}
		0&1&0&0\\
		-c_2&0&1&0\\
		-c_1+c_3c_2&0&-c_3&1\\
		-c_0&0&0&0
		\end{array}\right], \quad
		\left[\begin{array}{rrrr}
		-c_3&1&0&0\\
		 0  &0 &1&0\\
		-c_1+c_3c_2&-c_2&0&1\\
		 -c_0&0&0&0
		\end{array}\right], \quad
		\left[\begin{array}{rrrrr}
     -c_3    &1&0&0\\
     -c_2+a   &0&1&0\\
     -c_1+ac_3 &-a&0&1\\
     -c_0    &0&0&0
     \end{array}
     \right].
		$$
\caption{Some $4\times 4$  companion matrices.} \label{fig:gencompmat}
\end{figure}

\begin{figure}
$$		\left[\begin{array}{rrrr}
		-c_3&1&0&0\\
		-c_2&0&1&0\\
		-c_1&0&0&1\\
		-c_0&0&0&0
		\end{array}\right], ~~
\left[\begin{array}{rrrr}
		-c_3&-c_2&-c_1&-c_0\\
		1&0&0&0\\
		0&1&0&0\\
		0&0&1&0
		\end{array}\right], ~~
\left[\begin{array}{rrrr}
		0&0&0&-c_0\\
		1&0&0&-c_1\\
		0&1&0&-c_2\\
		0&0&1&-c_3
		\end{array}\right]. 
$$\caption{Some companion matrices equivalent to the 
$4\times 4$ Frobenius companion matrix.}\label{fig:eqFrb}
\end{figure}

Since matrix transposition and permutation similarity does not affect the characteristic polynomial, nor the set of nonzero entries in a matrix, we call two companion matrices \emph{equivalent} if one can be obtained
from the other via transposition and/or  permutation similarity.
The matrices in Figure~\ref{fig:eqFrb} are equivalent to the $4 \times 4$ Frobenius 
companion matrix. Note that if $A$ and $B$ are equivalent matrices, then the multiset of entries in any row of $A$ is exactly the multiset of entries of some row or column of $B$. No two matrices from Figures~\ref{fig:excom} and~\ref{fig:gencompmat}
are equivalent (assuming $a\neq 0$).

Fielder \cite{F} introduced a class of companion matrices 
that are constructed as
a product of
certain block diagonal matrices. In particular, let
$F_0$ be a diagonal matrix with diagonal
entries $(1,\ldots,1,-c_0)$
and for $k =1,\ldots,n-1$, let
$$
F_k = \left[\begin{array}{ccc} I_{n-k-1}&O&O\\O&T_k &O\\ O&O& 
I_{k-1}\end{array}\right] 
~~\mbox{with}~~  T_k=\left[\begin{array}{cc} -c_k&1\\1&0
\end{array}\right].
$$
Fiedler showed (see \cite[Theorem 2.3]{F})
that the product of these $n$ matrices, in any order,
will produce a companion matrix of $p(x)
=x^n+c_{n-1}x^{n-1}+c_{n-2}x^{n-2}+\cdots+c_0$.  
Consequently, given any permutation
$\sigma = (\sigma_0,\sigma_2,\ldots,\sigma_{n-1})$
 of $\{0,1,2,\ldots,n-1\}$, we say that 
$F_\sigma=F_{\sigma_0}F_{\sigma_1}\cdots F_{\sigma_{n-1}}$  is a 
\emph{Fiedler companion matrix}. The Frobenius companion matrix is a 
Fiedler companion matrix
since the Frobenius companion matrix is equivalent to
$F_0F_1\cdots F_{n-1}$,
as noted in~\cite{F}.  

In~\cite{EKSV} it was demonstrated that every unit sparse 
companion matrix is equivalent to a 
unit lower Hessenberg matrix, as summarized in 
Theorem~\ref{thm:sparse}. Note that, 
for
$0\leq k\leq n-1$, 
the $k$-th \emph{subdiagonal} of a matrix $A=[a_{ij}]$ consists of the entries $\{a_{k+1,1}, a_{k+2,2},\ldots,a_{n,n-k}\}$. The $0$-th subdiagonal is usually
called the main diagonal of a matrix.

\begin{theorem}\cite[Corollary 4.3]{EKSV} \label{thm:sparse}
Let $p(x) = x^n + c_{n-1}x^{n-1} + c_{n-2}x^{n-2} + 
\cdots + c_{1}x + c_0$ be a polynomial 
over $\mathbb{R}$ with $n\geq 2$. 
Then $A$ is an $n \times n$ unit sparse companion matrix to $p(x)$ if
and only if $A$ is equivalent to a unit lower Hessenberg matrix 
\begin{equation}\label{mat:sparseForm}
C  =  \left[ \begin{array}{ccccc}
\multirow{2}{*}{$\mathbf{0}$} & \multicolumn{2}{c|}{\multirow{2}{*}{$I_{m}$}} & \multicolumn{2}{c}{\multirow{2}{*}{$O$}} \\
                              & \multicolumn{2}{c|}{}                        & \multicolumn{2}{c}{} \\ \hline
\multicolumn{3}{c|}{\multirow{3}{*}{$R$}} & \multicolumn{2}{c}{\multirow{2}{*}{$I_{n-m-1}$}} \\
\multicolumn{3}{c|}{}                        & \multicolumn{2}{c}{} \\
\multicolumn{3}{c|}{}                        & \multicolumn{2}{c}{\mathbf{0}^T} \\
\end{array} \right]
\end{equation}
for some $(n-m) \times (m+1)$ matrix $R$ with $m(n-1-m)$ zero entries, 
such that $C$ has $-c_{n-1-k}$ on its $k$-th subdiagonal, 
for $0\leq k \leq n-1$.
\end{theorem}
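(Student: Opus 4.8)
The plan is to work entirely with the digraph $D(A)$ of a matrix $A$ (vertices $1,\dots,n$; an arc $i\to j$ of weight $a_{ij}$ whenever $a_{ij}\neq 0$) together with the classical expansion of the characteristic polynomial in terms of \emph{linear subgraphs}, i.e.\ vertex-disjoint unions of directed cycles: the coefficient of $x^{n-k}$ in $\det(xI-A)$ equals $\sum_{L}(-1)^{c(L)}w(L)$, where $L$ ranges over linear subgraphs on exactly $k$ vertices, $c(L)$ is the number of cycles of $L$ (a loop counting as a cycle), and $w(L)$ is the product of the weights of its arcs. Since transposition and permutation similarity alter neither the characteristic polynomial nor the multiset of entries of a matrix — hence neither the property of being unit sparse — it suffices to prove the ``if'' direction for $A=C$ of the form \eqref{mat:sparseForm}, and to prove the ``only if'' direction by producing \emph{some} equivalent matrix of that form.

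For ``if'', reading \eqref{mat:sparseForm} off directly, $C$ carries exactly $n-1$ ones (the two identity blocks, both sitting on the superdiagonal), exactly $n$ coefficient entries $-c_0,\dots,-c_{n-1}$ inside $R$ — one on each subdiagonal by hypothesis, and $(n-m)(m+1)-m(n-1-m)=n$ by the stated zero count — and zeros elsewhere, so $C$ is unit sparse. The crucial structural point is that every nonzero entry of $C$ lying on or below the main diagonal sits in a row $\ge m+1$ and a column $\le m+1$. A short argument then shows that each directed cycle of $D(C)$ uses exactly one such ``chord'' arc (two chords would force the cycle to pass through vertex $m+1$ twice), and that a chord in position $(r,c)$ determines the unique cycle $c\to c+1\to\cdots\to r\to c$. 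Thus every cycle runs through vertex $m+1$, so $D(C)$ is intercyclic, the only linear subgraphs are single cycles, and there is exactly one cycle of each length $1,\dots,n$, the one of length $k$ having weight $-c_{n-k}$ (its chord lies on the $(k-1)$-st subdiagonal). The linear-subgraph formula then gives the coefficient of $x^{n-k}$ as $(-1)^{1}(-c_{n-k})=c_{n-k}$, i.e.\ $\det(xI-C)=p(x)$.

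For ``only if'', let $A$ be unit sparse companion to $p(x)$. The heart of the matter — and the step I expect to be the main obstacle — is to deduce from the linear-subgraph formula, the algebraic independence of $c_0,\dots,c_{n-1}$, and the tight budget of $n$ coefficient arcs and $n-1$ unit arcs, that $D(A)$ is intercyclic with exactly one directed cycle of each length $\ell=1,\dots,n$, the cycle $\gamma_\ell$ having weight $-c_{n-\ell}$. (Each $c_{n-\ell}$ must be produced by some cycle, which uses up all available arcs in exactly one cycle per length; and two vertex-disjoint cycles would contribute an uncancellable product $c_ic_j$ to some coefficient. This is essentially the assertion that sparse companion matrices coincide with intercyclic ones, for which a careful treatment is in \cite{VV,EKSV}.) Granting this, relabel the vertices by a permutation similarity so that $\gamma_n$ becomes $1\to 2\to\cdots\to n\to 1$; then the $n-1$ unit arcs are forced to be the superdiagonal arcs $i\to i+1$, after a further cyclic relabelling the coefficient arc of $\gamma_n$ is $n\to 1$, and each shorter $\gamma_\ell$ must consist of a single superdiagonal run $v\to v+1\to\cdots\to u$ with $u-v=\ell-1$, closed by its coefficient arc $u\to v$. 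Hence $A$ is now unit lower Hessenberg with $-c_{n-1-k}$ on its $k$-th subdiagonal.

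It remains to extract the block structure. Two cycles $\gamma_\ell,\gamma_{\ell'}$ are vertex-disjoint precisely when the vertex intervals $[v,u]$ and $[v',u']$ they occupy are disjoint; intercyclicity says these $n$ intervals pairwise intersect, and since $\gamma_1$ is a loop its interval is a single vertex $q$, forcing every interval to contain $q$. Therefore every coefficient arc lies in a row $\ge q$ and a column $\le q$. Setting $m=q-1$, the submatrix $R$ in rows $m+1,\dots,n$ and columns $1,\dots,m+1$ contains all $n$ coefficient entries (one per subdiagonal) and is otherwise zero, i.e.\ it has exactly $(n-m)(m+1)-n=m(n-1-m)$ zero entries; the positions above the superdiagonal vanish because $A$ is Hessenberg, and the on-or-below-diagonal positions outside $R$ vanish because no coefficient arc reaches them. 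This is exactly \eqref{mat:sparseForm}, which completes the plan.
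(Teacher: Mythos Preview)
The paper does not supply its own proof of this statement: Theorem~\ref{thm:sparse} is quoted as \cite[Corollary~4.3]{EKSV} and used thereafter as background, with no argument given. So there is no in-paper proof to compare your proposal against.

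That said, your reconstruction is sound and tracks the digraph approach of \cite{EKSV,VV}. The ``if'' direction is clean: the count $(n-m)(m+1)-m(n-1-m)=n$ shows $R$ carries exactly one coefficient per subdiagonal, and your observation that every chord arc starts in a row $\ge m+1$ and ends in a column $\le m+1$ forces each cycle through vertex $m+1$, giving intercyclicity and one cycle of each length; the linear-subgraph expansion then yields $p(x)$. For the ``only if'' direction you correctly isolate the hard step --- that a unit sparse companion matrix has an intercyclic digraph with exactly one cycle of each length $\ell$, of weight $-c_{n-\ell}$ --- and explicitly defer it to \cite{EKSV,VV}; since that is precisely the content of the cited corollary, this is honest bookkeeping rather than a gap. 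The downstream steps (cyclically relabel along $\gamma_n$ so the $n-1$ unit arcs become the superdiagonal; observe that each $\gamma_\ell$ is then a superdiagonal run closed by one chord; use the Helly-type fact that the loop vertex $q$ lies in every cycle's vertex interval to force all chords into rows $\ge q$ and columns $\le q$) are correct and deliver \eqref{mat:sparseForm} with $m=q-1$. One small point you glide over: that each $\gamma_\ell$ carries \emph{exactly one} coefficient arc --- so its remaining $\ell-1$ arcs are unit, hence superdiagonal --- is needed for step three, and it follows because $w(\gamma_\ell)=-c_{n-\ell}$ as a polynomial in the algebraically independent $c_i$, which forces a single linear factor.
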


Note that in \eqref{mat:sparseForm},
the unit lower Hessenberg matrix $C$
always has $C_{n,1}=-c_0$
and $R_{1,m+1}=-c_{n-1}.$
Given this Hessenberg characterization 
of the unit sparse companion matrices, one can deduce the 
corresponding inverse matrix if $c_0 \neq 0$.

\begin{lemma}
\label{lem:inverse}
\cite[Section 7]{VV}
Let $p(x) = x^n + c_{n-1}x^{n-1} + c_{n-2}x^{n-2} + 
\cdots + c_{1}x + c_0$ be a polynomial 
over $\mathbb{R}$ with $n\geq 2$. 
Suppose that $C$ is a unit lower Hessenberg companion matrix to $p(x)$ 
as in \eqref{mat:sparseForm}. 
Assuming $c_0 \neq 0$, if
$$ 
\renewcommand{\arraystretch}{1.5}
C = \left[
\begin{array}{c|c|c}
	\mathbf{0} & I_m & O \\ \hline
	\mathbf{u} & H & I_{n-m-1} \\\hline 
	{-c_0} & \mathbf{y}^T & \mathbf{0}^T \\
\end{array}
\right],~~\mbox{for some}~~  \mathbf{u},\mathbf{y}, H, ~~\mbox{then}~~
C^{-1} = \left[
\begin{array}{c|c|c}
	\frac{1}{c_0}\mathbf{y}^T & \mathbf{0}^T & {-\frac{1}{c_0}} \\ \hline
	I_m & O & \mathbf{0} \\ \hline 
 {-\frac{1}{c_0}\mathbf{u}\mathbf{y}^T - H} 
 & I_{n-m-1} & \frac{1}{c_0}\mathbf{u}
\end{array}
\right].
$$ 
\end{lemma}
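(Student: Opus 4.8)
The plan is to verify the claimed formula for $C^{-1}$ directly, by partitioned matrix multiplication, rather than by inverting $C$ from scratch. Writing $C$ in the three-column, three-row block form given in the statement (with column block widths $1, m, n-m-1$ and matching row block heights $1$ after the first block is displayed as having $m$ rows on top, then $n-m-1$, then $1$; more precisely the blocks are sized so that the indicated products make sense), I would simply compute $C \cdot B$, where $B$ is the matrix proposed as $C^{-1}$, and check that every one of the nine resulting blocks equals the corresponding block of $I_n$. By symmetry of the claim one should also check $B \cdot C = I_n$, but since $C$ is square, verifying one side suffices once we know both are $n \times n$; I would do $CB = I$.

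The key computational steps: the $(1,1)$ block of $CB$ is $\mathbf{0}\cdot \tfrac{1}{c_0}\mathbf{y}^T + I_m \cdot I_m + O \cdot (-\tfrac{1}{c_0}\mathbf{u}\mathbf{y}^T - H) = I_m$; the $(1,2)$ and $(1,3)$ blocks vanish similarly because the only surviving term involves $I_m$ multiplying a zero block. The bottom row of $C$ is $(-c_0,\ \mathbf{y}^T,\ \mathbf{0}^T)$, so the $(3,1)$ entry of $CB$ is $-c_0 \cdot \tfrac{1}{c_0}\mathbf{y}^T + \mathbf{y}^T \cdot I_m + \mathbf{0}^T\cdot(\cdots) = -\mathbf{y}^T + \mathbf{y}^T = \mathbf{0}^T$ as required (note this is a row vector of length $m$, matching the off-diagonal block of $I_n$), and the $(3,3)$ entry is $-c_0\cdot(-\tfrac{1}{c_0}) + \mathbf{0}^T \cdot (\cdots) = 1$. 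The middle row block of $C$ is $(\mathbf{u},\ H,\ I_{n-m-1})$, so the $(2,1)$ block of $CB$ is $\mathbf{u}\cdot\tfrac{1}{c_0}\mathbf{y}^T + H\cdot I_m + I_{n-m-1}\cdot(-\tfrac{1}{c_0}\mathbf{u}\mathbf{y}^T - H) = \tfrac{1}{c_0}\mathbf{u}\mathbf{y}^T + H - \tfrac{1}{c_0}\mathbf{u}\mathbf{y}^T - H = O$, the $(2,2)$ block is $O + O + I_{n-m-1} = I_{n-m-1}$, and the $(2,3)$ block is $\mathbf{u}\cdot(-\tfrac{1}{c_0}) + H\cdot\mathbf{0} + I_{n-m-1}\cdot\tfrac{1}{c_0}\mathbf{u} = -\tfrac{1}{c_0}\mathbf{u} + \tfrac{1}{c_0}\mathbf{u} = \mathbf{0}$. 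That exhausts all nine blocks and gives $CB = I_n$.

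The only genuine subtlety, and the step I would be most careful about, is bookkeeping the block dimensions so that every product written above is legitimate: one must confirm that $\mathbf{u}$ is $(n-m-1)\times 1$, that $\mathbf{y}$ is $m \times 1$ (so $\mathbf{y}^T$ is $1\times m$), that $H$ is $(n-m-1)\times m$, and hence that $\mathbf{u}\mathbf{y}^T$ is $(n-m-1)\times m$, matching $H$ and matching the $(2,1)$ off-diagonal block of $I_n$. These all follow from comparing with the Hessenberg form \eqref{mat:sparseForm} in Theorem~\ref{thm:sparse}, where the top-left zero column has height $n-m$ after we peel off the single top row, the identity block $I_m$ sits in rows $1,\dots,m$, and $R$ occupies the last $n-m$ rows and first $m+1$ columns; splitting $R$ as $[\mathbf{u}\mid H]$ on its first $n-m-1$ rows and $[-c_0\mid \mathbf{y}^T]$ on its last row recovers exactly the decomposition used. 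Once the shapes are pinned down, the verification is the routine block arithmetic above, and no appeal to $c_0\neq 0$ is needed beyond ensuring $\tfrac{1}{c_0}$ is defined. I would remark that this lemma is essentially a special case of the standard Schur-complement / bordered-inverse formula, but the direct check is shorter to present.
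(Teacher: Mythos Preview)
Your verification is correct: the nine block products you compute do assemble to $I_n$, and since $C$ and $B$ are both $n\times n$ this suffices to conclude $B=C^{-1}$. The dimension bookkeeping you describe is also right (with $\mathbf{u}\in\mathbb{R}^{n-m-1}$, $\mathbf{y}\in\mathbb{R}^{m}$, and $H$ of size $(n-m-1)\times m$).

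As for comparison with the paper: the paper does not supply its own proof of this lemma at all --- it simply cites \cite[Section~7]{VV} and moves on. Your direct block-multiplication check is exactly the kind of argument one would expect behind such a citation, and indeed the paper later handles the analogous inverse in Lemma~\ref{propertiesofM}(ii) the same way (``a direct multiplication will show that the given matrix is the inverse''). So your approach is both correct and in the same spirit as what the paper implicitly relies on; there is nothing to add beyond tightening the prose around the block-size description, which reads a bit tangled even though the arithmetic that follows it is clean.
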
 

Throughout this
paper, we use the 
\emph{Frobenius norm}  
of an
$n \times n$ matrix
$A = [a_{i,j}]$ given by
$$|| A || = \sqrt{\sum_{i,j} a_{i,j}^2}.$$
\begin{remark}\label{rem:same}
If $A$ and $B$ are both unit sparse companion
matrices to the same polynomial $p(x)$, then
it follows that $||A|| = ||B||$ since $A$
and $B$ have exactly the same entries.
Furthermore, if $A = PBP^T$ for some
permutation matrix $P$, then $A^{-1}$ and
$B^{-1}$ also have the same entries,
and hence $||A^{-1}|| = ||B^{-1}||$.
\end{remark}
The {\it condition number}
of $A$, denoted $\kappa(A)$, is defined to be
$$\kappa(A)  = || A||\cdot ||A^{-1}||.$$
Remark~\ref{rem:same}  implies the following lemma.  
\begin{lemma}\label{equivalentcondition}
If $A$ and $B$ are equivalent companion matrices, 
then $\kappa(A) = \kappa(B)$.
\end{lemma}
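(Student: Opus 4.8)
The plan is to reduce to the two elementary operations that generate the equivalence relation — transposition and permutation similarity — and to check that each of them preserves both factors in $\kappa(A) = \|A\|\cdot\|A^{-1}\|$. Since, by definition, $A$ and $B$ are equivalent precisely when $B$ can be reached from $A$ by applying finitely many transpositions and permutation similarities, and since $\kappa$ is only defined for an invertible matrix, I would first record that $\det(A^T) = \det(A)$ and $\det(PAP^T) = \det(A)$ for every permutation matrix $P$, so that invertibility is preserved along any such chain. It then suffices to prove $\kappa(A^T) = \kappa(A)$ and $\kappa(PAP^T) = \kappa(A)$ for invertible $A$, and to chain these equalities along a sequence of elementary operations linking $A$ to $B$.

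For the transposition step I would use that $A$ and $A^T$ have the same multiset of entries, so $\|A^T\| = \|A\|$, together with the identity $(A^T)^{-1} = (A^{-1})^T$, which gives $\|(A^T)^{-1}\| = \|(A^{-1})^T\| = \|A^{-1}\|$; hence $\kappa(A^T) = \kappa(A)$. For the permutation-similarity step, $PAP^T$ is obtained from $A$ by permuting its rows and applying the same permutation to its columns, so $A$ and $PAP^T$ again share their multiset of entries and $\|PAP^T\| = \|A\|$; moreover $(PAP^T)^{-1} = PA^{-1}P^T$, so $A^{-1}$ and $(PAP^T)^{-1}$ also share their multiset of entries and $\|(PAP^T)^{-1}\| = \|A^{-1}\|$. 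These are exactly the observations already recorded in Remark~\ref{rem:same}, so this step is essentially a citation of that remark. Combining, $\kappa(PAP^T) = \kappa(A)$, and transitivity finishes the argument.

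I do not expect a genuine obstacle here: the proof is a matter of bookkeeping, the only points needing attention being that each elementary operation commutes with matrix inversion, that it leaves the Frobenius norm unchanged, and that invertibility is not lost along the chain — after which the result follows immediately by transitivity of equivalence.
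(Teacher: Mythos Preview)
Your proposal is correct and follows the same approach as the paper: the paper's proof consists solely of the sentence ``Remark~\ref{rem:same} implies the following lemma,'' and your argument is a careful unpacking of exactly that remark, making explicit the transposition case and the chaining along elementary operations. There is nothing materially different between the two.
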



\section{Condition numbers of Fiedler matrices via 
the Hessenberg characterization}\label{sec:Hess}

The condition numbers of Fiedler companion matrices were
first calculated by de Ter\'an, Dopico, and P\'erez
\cite[Theorem 4.1]{dTDP}. In this section we demonstrate how 
a characterization of Fielder companion
matrices via unit lower Hessenberg matrices, as given by Eastman,
{\it  et al.} \cite{EKSV},  provides an efficient way to
obtain the condition numbers for Fiedler companion
matrices.   Our approach avoids the
use of the consecution-inversion structure sequence,
described in \cite[Definition 2.3]{dTDP}, which was used
in the original computation of these numbers.

The following theorem gives a
characterization of the Fielder
companion matrices in terms of unit
lower Hesenberg matrices.

\begin{theorem}\cite[Corollary 4.4]{EKSV}
\label{thm:Fiedler} If 
$p(x)=x^n+c_{n-1}x^{n-1}+\cdots+c_{1}x+c_0$
is a polynomial over $\mathbb{R}$ with $n \geq 2$,
then $F$ is an $n \times n$ Fiedler companion matrix to 
$p(x)$ if and only if $F$ is equivalent to a unit lower 
Hessenberg matrix as in (\ref{mat:sparseForm}) with the
additional property that if $-c_{k}$ is in position 
$(i, j)$ then $-c_{k+1}$ is in position $(i-1, j)$ or 
$(i,j+1)$ for $1 \leq k \leq n-1$.
\end{theorem}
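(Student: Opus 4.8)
The plan is to prove both implications at once by induction on $n$. The base case $n=2$ is immediate: the only Fiedler matrices are $F_0F_1$ and $F_1F_0=(F_0F_1)^{T}$, and $F_0F_1$ is already a unit lower Hessenberg matrix of the form \eqref{mat:sparseForm} with $m=0$, for which the staircase condition is vacuous.

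For the inductive step, the two structural facts to exploit are that (i) each of $F_0,F_1,\dots,F_{n-2}$ fixes the first row and the first column (each is diagonal, or has an identity block $I_{n-k-1}$ with $n-k-1\ge 1$ in its upper-left corner), whereas $F_{n-1}$ acts only on the leading $2\times2$ block, and (ii) $F_{n-1}$ commutes with $F_0,\dots,F_{n-3}$. Hence in any product $F_\sigma=F_{\sigma_0}\cdots F_{\sigma_{n-1}}$ one may slide $F_{n-1}$ past every factor that separates it from $F_{n-2}$, obtaining $F_\sigma=F_{n-1}\,(1\oplus\hat F)$ when $F_{n-1}$ precedes $F_{n-2}$ in $\sigma$ and $F_\sigma=(1\oplus\hat F)\,F_{n-1}$ otherwise; here $1\oplus\hat F$ denotes the block-diagonal matrix with a single $1$ in position $(1,1)$ and an $(n-1)\times(n-1)$ block $\hat F$ in the lower-right corner, and $\hat F$ is the product, in the order induced by $\sigma$, of the $(n-1)\times(n-1)$ Fiedler factors of $q(x)=x^{n-1}+c_{n-2}x^{n-2}+\cdots+c_0$, hence is itself a Fiedler companion matrix of $q$. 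I would then invoke the inductive hypothesis to replace $\hat F$ by a staircase Hessenberg matrix for $q$ of the form \eqref{mat:sparseForm}, substitute into the two expressions above, multiply out by $F_{n-1}$, and apply a suitable equivalence (a permutation similarity, composed if necessary with transposition) to return to the form \eqref{mat:sparseForm} for $p$. The point to check is that the net effect on the positions of the coefficient entries is precisely to append one lattice step to the staircase of $\hat F$: an "up" step in one of the two cases and a "right" step in the other, with the new entry $-c_{n-1}$ landing on the main diagonal and the block index $m$ increasing by $0$ or $1$ accordingly. The converse implication reverses this reduction: given a staircase Hessenberg matrix for $p$, its final step (from $-c_{n-2}$ to $-c_{n-1}$) dictates on which side $F_{n-1}$ must be placed, and deleting that step together with the entry $-c_{n-1}$ leaves, after the inverse relabeling, a staircase Hessenberg matrix for $q$ to which the inductive hypothesis applies, recovering $\hat F$ and hence the permutation $\sigma$.

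The main obstacle is the bookkeeping in the inductive step: confirming that the equivalence needed to restore the block structure \eqref{mat:sparseForm} after multiplying $1\oplus\hat F$ by $F_{n-1}$ acts on the coefficient entries exactly as "append one step," and in particular does not disturb the staircase configuration already present among $-c_0,\dots,-c_{n-2}$ inside $\hat F$. Throughout I would use Theorem~\ref{thm:sparse} both to guarantee that a form \eqref{mat:sparseForm} is available and that it is pinned down by where the coefficient entries sit, and Lemma~\ref{equivalentcondition} to pass freely between equivalent matrices; the apparent boundary case $k=n-1$ in the statement, where "$-c_{k+1}$" would be the (monic) leading term, is subsumed by reading that condition as requiring the relevant superdiagonal $1$ to be diagonally adjacent to $-c_{n-1}$, which holds automatically in \eqref{mat:sparseForm}.
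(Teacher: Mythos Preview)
The paper does not supply its own proof of this theorem: it is quoted as \cite[Corollary~4.4]{EKSV} and used thereafter as a black box, so there is no argument in the paper to compare yours against.

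On its own merits, your inductive strategy is a reasonable route to the result, and the key observations --- that $F_{n-1}$ commutes with $F_0,\dots,F_{n-3}$ and that the remaining factors assemble into $1\oplus\hat F$ with $\hat F$ a Fiedler matrix for the degree-$(n-1)$ truncation --- are correct. The difficulty you flag is real, however, and your outline does not resolve it: the inductive hypothesis delivers $\hat F$ only up to equivalence, and a permutation $1\oplus P$ need not commute with $F_{n-1}$, so one cannot simply ``substitute'' the Hessenberg form for $\hat F$ inside $F_{n-1}(1\oplus\hat F)$. The standard remedy is to strengthen the induction: prove directly that every Fiedler product is itself a unit Hessenberg matrix (lower or upper) with the staircase pattern, not merely equivalent to one. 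With that formulation the substitution step disappears and the two placements of $F_{n-1}$ correspond transparently to the two possible final lattice steps. One small correction: your appeal to Lemma~\ref{equivalentcondition} is misplaced --- that lemma asserts equality of condition numbers for equivalent matrices and has no bearing on the structural equivalence you need here.
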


An alternative way to describe the unit lower
Hesenberg matrix in Theorem \ref{thm:Fiedler} is
to say that the variable entries of $R$
in \eqref{mat:sparseForm} form a lattice-path from
the bottom-left corner to the top-right corner of $R$.
The first two matrices in Figure \ref{fig:excom} are examples of Fiedler companion matrices since the variable entries of $R$ form a lattice-path. The last matrix in Figure \ref{fig:excom} is  not a Fiedler companion matrix.
 
 If $F$ is a Fiedler companion matrix, the \emph{initial step size} 
of $F$  is the number of coefficients other than $c_0$ 
in the row or column containing both $c_0$ and $c_{1}$.
The first matrix in Figure \ref{fig:excom} has initial step 
size three and the second matrix in Figure~\ref{fig:excom} has
initial step size one. 

\begin{remark}
Note that equivalent matrices have the same initial
step size since 
transpositions and permutation equivalence does not
change the number of coefficients in the row or 
column containing $c_0$ and $c_1$.
\end{remark}

Using Theorem \ref{thm:Fiedler} and Lemma \ref{lem:inverse},  one can describe the nonzero entries of the inverse of a Fiedler
companion matrix:

\begin{lemma}\cite[Theorem 3.2]{dTDP}
\label{thm:inventries}
Let $p(x) = x^n + c_{n-1}x^{n-1} + c_{n-2}x^{n-2} + 
\cdots + c_{1}x + c_0$ be a polynomial 
over $\mathbb{R}$ with $n\geq 2$ and $c_0 \neq 0$. Let $F$ 
be a Fiedler companion matrix to $p(x)$
with an initial step size $t$.
Then
\begin{enumerate} 
\item $F^{-1}$ has $t+1$ entries equal to $-\frac{1}{c_0},-\frac{c_{1}}{c_0},\ldots, -\frac{c_{t}}{c_0}$, 
\item $F^{-1}$ has $n-1-t$ entries equal to $c_{t+1},c_{t+2},\ldots,c_{n-1}$, 
\item $F^{-1}$ has $n-1$ entries equal to 1, and
\item the remaining entries of $F^{-1}$ are 0. 
\end{enumerate}
\end{lemma}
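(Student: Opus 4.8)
The plan is to pass to the unit lower Hessenberg normal form and then read the inverse directly off Lemma~\ref{lem:inverse}. Since equivalent companion matrices have inverses with the same multiset of entries (Remark~\ref{rem:same}) and, as observed above, the same initial step size, I may assume that $F$ is the unit lower Hessenberg companion matrix $C$ of \eqref{mat:sparseForm}, written in the block form with blocks $\mathbf u$, $H$, $\mathbf y^T$ appearing in Lemma~\ref{lem:inverse}, so that the lower-left $(n-m)\times(m+1)$ block $R$ has $-c_0$ in its bottom-left corner $C_{n,1}$ and $-c_{n-1}$ in its top-right corner $R_{1,m+1}$. By Theorem~\ref{thm:Fiedler} the variable entries $-c_0,-c_1,\dots,-c_{n-1}$ occupy the cells of a lattice path in $R$ that starts at the bottom-left corner, ends at the top-right corner, and at each step moves either one cell up or one cell to the right.

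The combinatorial heart of the argument is to read the blocks $\mathbf u$, $\mathbf y^T$, $H$ off this path. First I would note that the path's initial step is either to the right, in which case the path never revisits column~$1$ of $R$ and hence $\mathbf u=\mathbf 0$; or upward, in which case it never revisits the bottom row of $R$ and hence $\mathbf y^T=\mathbf 0^T$. In the first case, $c_0$ and $c_1$ lie in the bottom row of $R$, the path's initial segment runs rightward along that row through the cells holding $-c_0,-c_1,\dots,-c_t$ (where $t$ is the initial step size, i.e.\ the number of coefficients other than $c_0$ in that row) and then turns up, so $\mathbf y^T$ has nonzero entries exactly $-c_1,\dots,-c_t$; the remaining coefficients $-c_{t+1},\dots,-c_{n-1}$ then all lie in rows $m+1,\dots,n-1$ and columns $2,\dots,m+1$, that is, inside $H$, so $H$ has nonzero entries exactly $-c_{t+1},\dots,-c_{n-1}$. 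The upward case is completely symmetric, with the roles of $\mathbf u$ and $\mathbf y^T$ interchanged.

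Finally I would substitute this information into the expression for $C^{-1}$ in Lemma~\ref{lem:inverse}. Since one of $\mathbf u$, $\mathbf y^T$ is zero, the product $\mathbf u\mathbf y^T$ is the zero matrix, so the lower-left block $-\tfrac{1}{c_0}\mathbf u\mathbf y^T-H$ collapses to $-H$; reading off the blocks then yields the $t+1$ entries $-\tfrac{1}{c_0},-\tfrac{c_1}{c_0},\dots,-\tfrac{c_t}{c_0}$ (from $\tfrac{1}{c_0}\mathbf y^T$ or $\tfrac{1}{c_0}\mathbf u$ together with the corner entry $-\tfrac1{c_0}$), the $n-1-t$ entries $c_{t+1},\dots,c_{n-1}$ (from $-H$), the $m+(n-m-1)=n-1$ ones (from the two identity blocks), and zeros in the remaining positions. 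I expect the only real work to be the bookkeeping in the second step — pinning down which cells of the path form $\mathbf y^T$ (or $\mathbf u$) versus $H$, and checking that the indices land exactly as claimed — together with verifying the degenerate cases in which $H$, $\mathbf u$, or $\mathbf y^T$ is empty, such as $m=n-1$ (the Frobenius case, where $t=n-1$).
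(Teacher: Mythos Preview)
Your proposal is correct and follows essentially the same route as the paper: reduce to the Hessenberg normal form $C$, use the Fiedler lattice-path characterization (Theorem~\ref{thm:Fiedler}) to conclude that one of $\mathbf u$, $\mathbf y^T$ vanishes while the other carries $-c_1,\dots,-c_t$ and $H$ carries $-c_{t+1},\dots,-c_{n-1}$, and then read $C^{-1}$ off Lemma~\ref{lem:inverse}. The only cosmetic difference is that the paper takes the ``upward'' case ($\mathbf y^T=\mathbf 0$, nonzeros in $\mathbf u$) as its WLOG representative whereas you take the ``rightward'' case, and your lattice-path phrasing is slightly more explicit than the paper's positional argument; the content is the same.
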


\begin{proof} 
Since $F$ is a companion matrix to $p(x)$, 
by Theorem \ref{thm:sparse}, the matrix $F$ is equivalent
to a lower Hessenberg matrix $C$ of the
form \eqref{mat:sparseForm}.  
Since $F$ and $C$ are equivalent, 
it follows that
the matrices $F^{-1}$ and $C^{-1}$ are 
equivalent, so it suffices to show that the matrix $C^{-1}$ satisfies conditions $(1)-(4)$.

Since $F$ is a Fielder companion matrix, Theorem
\ref{thm:Fiedler} implies that $c_1$ is either directly above $c_0$ in
$C$ or directly to the right of $c_1$.
If $c_1$ is to right of 
$c_0$ in $C$, then all other entries in the column containing $c_0$
is zero.  Alternatively, if $c_1$ is above $c_0$, all
entries to the right of $c_0$ in $C$ are zero.  

Lemma \ref{lem:inverse}, which gives us the inverse of a unit
lower Hessenberg matrix, applies to the matrix $C$.  By our above 
observation, the vector ${\bf u}$ or the vector ${\bf y}$ must be 
the zero vector. 
Without loss of generality, let ${\bf y}^T$ be zero, which means that 
$-\frac{1}{c_0}{\bf u}{\bf y}^T - H = -H$. If the initial step size
of $A$ is $t$, then there will be $t$ nonzero elements in ${\bf u}$, and it will have 
the form
\begin{equation*}
{\bf u} = \begin{bmatrix}
0 \\
\vdots \\
0 \\ 
-c_{t} \\
\vdots \\
-c_1 
\end{bmatrix} . 
\end{equation*}
By Lemma \ref{lem:inverse} the inverse of the matrix $C$ then has the form
\renewcommand{\arraystretch}{1.5}
\begin{equation}\label{matrixinverse} 
C^{-1} = \left[
\begin{array}{c|c|c}
	{\bf 0}^T& {\bf 0}^T & {-\frac{1}{c_0}} \\[6pt] \hline
	I_m & O& {\bf 0} \\[3pt] \hline 
    
	-H & I_{n-m-1} & \frac{1}{c_0}{\bf u}
\end{array}
\right].
\end{equation}
From \eqref{matrixinverse}, we can
describe the entries of $C^{-1}$:
$m+n-m-1 = n-1$ entries are $1$ (coming from
the submatrices $I_{m}$ and $I_{n-m-1}$); 
$c_{t+1}, \ldots, c_{n-1}$, which
all belong to the submatrix $-H$; the entry 
$-\frac{1}{c_0}$ from the top-right corner;
and the entries
$-\frac{c_1}{c_0},\ldots,-\frac{c_t}{c_0}$
from the term $\frac{1}{c_0}{\bf u}$.  Moreover,
the rest of the entries of $C^{-1}$ are zero.  
We have now shown that $C^{-1}$, and hence $F^{-1}$, has the desired
properties.
\end{proof}

\begin{remark}
Lemma ~\ref{thm:inventries} mimics \cite[Theorem 3.2]{dTDP}. As
observed in~\cite{VV},  the initial step size of a Fiedler companion 
matrix is equal to the number of initial 
consecutions or inversions of the permuation
associated with the Fielder companion matrix, as defined in \cite{dTDP}.
\end{remark}

We can now compute the condition number
for any Fiedler companion matrix.  This
result first appeared in \cite{dTDP}, 
but we can avoid 
the  formal analysis of the permutation that was used to construct
the Fiedler companion matrix, as well as the associated concepts of consecution
and inversion of a permutation.

\begin{theorem}\cite[Theorem 4.1]{dTDP}
\label{thm:FCondNumber}
Let $p(x) = x^n + c_{n-1}x^{n-1} + c_{n-2}x^{n-2} + \cdots + c_{1}x + c_0$ be a polynomial over $\mathbb{R}$ with $n\geq 2$ and $c_0 \neq 0$. Let $F$ be a Fiedler companion matrix to $p(x)$ with an initial step size $t$. Then
\begin{equation}\nonumber
    \kappa(F)^2 = ||F||^2\cdot\left((n-1) + \frac{1 + |c_{1}|^2+\cdots+|c_{t}|^2}{|c_0|^2}+|c_{t+1}|^2+ \cdots +|c_{n-1}|^2\right),
\end{equation}
with
\begin{equation}\nonumber
    ||F||^2 = (n-1)+|c_{0}|^2 + |c_{1}|^2 + \cdots + |c_{n-1}|^2.
\end{equation}
\end{theorem}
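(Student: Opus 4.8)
The plan is to obtain each factor in $\kappa(F)^2 = ||F||^2\cdot ||F^{-1}||^2$ by reading off the multiset of entries of $F$ and of $F^{-1}$ directly; beyond Lemma~\ref{thm:inventries} no further structural analysis is needed. First I would record that every Fiedler companion matrix is a unit sparse companion matrix to $p(x)$ (by Theorem~\ref{thm:Fiedler} it is equivalent to one of the Hessenberg matrices of the form \eqref{mat:sparseForm}, and equivalence preserves the multiset of entries), so its entries consist of $n-1$ ones, the $n$ values $-c_0,-c_1,\ldots,-c_{n-1}$, and zeros. Summing squares gives $||F||^2 = (n-1) + |c_0|^2 + |c_1|^2 + \cdots + |c_{n-1}|^2$, which is the formula for $||F||^2$ in the statement; by Remark~\ref{rem:same} this value is unchanged if $F$ is replaced by an equivalent matrix.

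Next I would compute $||F^{-1}||^2$. Because $c_0 \neq 0$, the matrix $F$ is invertible, and Lemma~\ref{thm:inventries} enumerates the nonzero entries of $F^{-1}$: there are $n-1$ entries equal to $1$, there are $t+1$ entries equal to $-\frac{1}{c_0},-\frac{c_1}{c_0},\ldots,-\frac{c_t}{c_0}$, there are $n-1-t$ entries equal to $c_{t+1},\ldots,c_{n-1}$, and every other entry is $0$. Squaring and adding these contributions yields
\[
||F^{-1}||^2 = (n-1) + \frac{1 + |c_1|^2 + \cdots + |c_t|^2}{|c_0|^2} + |c_{t+1}|^2 + \cdots + |c_{n-1}|^2 .
\]
Multiplying the two expressions and invoking the definition $\kappa(F) = ||F||\cdot||F^{-1}||$ produces the asserted formula for $\kappa(F)^2$.

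There is no genuine obstacle remaining: all the substance — that $F^{-1}$ is equivalent to a matrix of the form \eqref{matrixinverse}, in which exactly one of the two vectors $\mathbf{u},\mathbf{y}$ from Lemma~\ref{lem:inverse} survives in the inverse, with precisely $t$ nonzero coordinates governed by the initial step size — has already been established in Lemma~\ref{thm:inventries} via the lattice-path description of Theorem~\ref{thm:Fiedler} and the inversion formula of Lemma~\ref{lem:inverse}. The only thing to watch is routine bookkeeping: confirming that the listed entries of $F$, and separately of $F^{-1}$, are exhaustive and counted without repetition, so that the two sums of squares collapse exactly to the closed forms above. In short, the theorem is a direct corollary of Lemma~\ref{thm:inventries} together with the definition of the Frobenius norm.
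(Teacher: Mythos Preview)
Your proposal is correct and matches the paper's approach exactly: the paper's proof simply notes that $F$ is unit sparse (giving the entries of $F$ and hence $||F||^2$) and invokes Lemma~\ref{thm:inventries} to read off the entries of $F^{-1}$ (giving $||F^{-1}||^2$). Your write-up is more detailed but follows the identical line of reasoning.
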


\begin{proof}
This result follows from the fact
that $F$ is a unit sparse companion matrix (so it contains
$n-1$ entries equal to $1$ and the entries $-c_0,\ldots,-c_{n-1}$), and Lemma \ref{thm:inventries}, which
describes the entries of $F^{-1}$.
\end{proof}

Because the condition number
$\kappa(F)$ of a Fiedler companion matrix 
$F$ depends only upon the 
initial step size and not the permutation
$\sigma$, we can derive the following corollary.

\begin{corollary}\label{Cor:step}
\textup{\cite[Corollary~4.3]{dTDP}} Let $
p(x) = x^n + c_{n-1}x^{n-1} + c_{n-2}x^{n-2} + \cdots + c_{1}x + c_0$ be a polynomial over $\mathbb{R}$ with $n\geq 2$ and $c_0 \neq 0$. Let $A$ and $B$ be Fiedler companion matrices to the polynomial $p(x)$. 
If the initial step size of both $A$ and $B$ is $t$, 
then $\kappa(A) = \kappa(B)$.
\end{corollary}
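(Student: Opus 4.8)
The plan is to read this off directly from Theorem~\ref{thm:FCondNumber}. That theorem expresses $\kappa(F)^2$ as a function of only three ingredients: the size $n$, the initial step size $t$, and the coefficients $c_0,c_1,\ldots,c_{n-1}$ of $p(x)$ (the coefficients entering both through $||F||^2$ and through the bracketed sum). First I would observe that since $A$ and $B$ are both Fiedler companion matrices to the \emph{same} polynomial $p(x)$, they share the same $n$ and the same list of coefficients; by hypothesis they also share the same initial step size $t$. Substituting these common values into the two displayed formulas of Theorem~\ref{thm:FCondNumber} yields $\kappa(A)^2 = \kappa(B)^2$, and since the condition number is a product of two Frobenius norms and hence nonnegative, this forces $\kappa(A) = \kappa(B)$.

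If one prefers to avoid invoking the closed-form expression, the same conclusion follows from Remark~\ref{rem:same} together with Lemma~\ref{thm:inventries}: both $A$ and $B$ are unit sparse companion matrices to $p(x)$, so $||A|| = ||B||$ by Remark~\ref{rem:same}; and Lemma~\ref{thm:inventries}, applied with the common step size $t$, shows that $A^{-1}$ and $B^{-1}$ have exactly the same multiset of entries (namely $t+1$ entries equal to $-\tfrac{1}{c_0},-\tfrac{c_1}{c_0},\ldots,-\tfrac{c_t}{c_0}$, then $n-1-t$ entries equal to $c_{t+1},\ldots,c_{n-1}$, then $n-1$ entries equal to $1$, and zeros elsewhere). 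Hence $||A^{-1}|| = ||B^{-1}||$, and therefore $\kappa(A) = ||A||\cdot||A^{-1}|| = ||B||\cdot||B^{-1}|| = \kappa(B)$.

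There is no genuine obstacle here: all of the content is carried by Theorem~\ref{thm:FCondNumber} (equivalently, by Lemma~\ref{thm:inventries}), and the only thing worth making explicit is that nothing in those formulas depends on the particular permutation $\sigma$ used in Fiedler's construction, only on $n$, $t$, and $p(x)$ --- which is precisely the remark immediately preceding the corollary. I would therefore keep the proof to a single short paragraph citing Theorem~\ref{thm:FCondNumber} (or Lemma~\ref{thm:inventries} with Remark~\ref{rem:same}) and noting this permutation-independence.
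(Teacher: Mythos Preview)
Your proposal is correct and matches the paper's approach exactly: the paper treats this corollary as an immediate consequence of Theorem~\ref{thm:FCondNumber}, noting in the sentence preceding the statement that the formula there depends only on $n$, the initial step size $t$, and the coefficients of $p(x)$, not on the permutation $\sigma$. Your alternative route via Remark~\ref{rem:same} and Lemma~\ref{thm:inventries} is also fine, as it is precisely the content underlying Theorem~\ref{thm:FCondNumber}.
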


Since condition numbers of Fiedler companion matrices
depend on the
initial step size, let
$$S_t = \{F ~|~ \mbox{$F$ is a Fiedler companion matrix to $p(x)$ with initial step size $t$}\},$$ 
and define $
    \kappa(t) = \kappa(F) ~~\mbox{for $F \in S_t$}.$
We can now recover a result of \cite{dTDP}
that allows us to compare the condition numbers
of Fielder matrices while again avoiding any reference to the permutation $\sigma$
used to define a Fiedler matrix.

\begin{corollary}
\cite[Corollary 4.5]{dTDP}
\label{Dopico45}
Let $p(x) = x^n + c_{n-1}x^{n-1} + c_{n-2}x^{n-2} + \cdots + c_1x + c_0$ be a polynomial over $\mathbb{R}$ with $n\geq 2$ and $c_0 \neq 0$.  
Then
\begin{enumerate}
    \item if $|c_0|<1$, then $\kappa(1) \leq \kappa(2) \leq \cdots \leq \kappa(n-1)$;
    \item if $|c_0|=1$, then $\kappa(1) = \kappa(2) = \cdots = \kappa(n-1)$; and
    \item if $|c_0|>1$, then $\kappa(1) \geq \kappa(2) \geq \cdots \geq \kappa(n-1)$.
\end{enumerate}
\end{corollary}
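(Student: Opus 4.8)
The plan is to reduce everything to Theorem~\ref{thm:FCondNumber} together with a one-line computation. By that theorem, for a Fiedler companion matrix $F$ with initial step size $t$ we have $\kappa(t)^2 = \|F\|^2 \cdot g(t)$, where
\[
g(t) = (n-1) + \frac{1 + |c_1|^2 + \cdots + |c_t|^2}{|c_0|^2} + |c_{t+1}|^2 + \cdots + |c_{n-1}|^2
\]
and $\|F\|^2 = (n-1) + |c_0|^2 + |c_1|^2 + \cdots + |c_{n-1}|^2$. The key observation is that $\|F\|^2$ depends only on the coefficients of $p(x)$ and not on $t$; hence the entire dependence of $\kappa(t)$ on $t$ is carried by the factor $g(t)$, and since all quantities involved are nonnegative it suffices to determine the monotonicity of $g$ on $\{1,\ldots,n-1\}$.

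First I would compute the consecutive difference. For $1 \le t \le n-2$, passing from $g(t)$ to $g(t+1)$ simply moves the single term $|c_{t+1}|^2$ out of the unscaled tail sum and into the numerator of the fraction, so
\[
g(t+1) - g(t) = \frac{|c_{t+1}|^2}{|c_0|^2} - |c_{t+1}|^2 = |c_{t+1}|^2 \cdot \frac{1 - |c_0|^2}{|c_0|^2}.
\]
Since $|c_{t+1}|^2 \ge 0$ and $|c_0|^2 > 0$, the sign of this difference is governed entirely by the sign of $1 - |c_0|^2$: it is $\ge 0$ when $|c_0| < 1$, exactly $0$ when $|c_0| = 1$, and $\le 0$ when $|c_0| > 1$.

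From here the three cases follow at once: in case (1) the sequence $g(1), g(2), \ldots, g(n-1)$ is non-decreasing, in case (2) it is constant, and in case (3) it is non-increasing. Multiplying through by the common positive factor $\|F\|^2$ and taking (nonnegative) square roots, while invoking Corollary~\ref{Cor:step} so that $\kappa(t)$ is well-defined independently of the permutation $\sigma$, yields the asserted chains of inequalities and equalities for $\kappa(1), \ldots, \kappa(n-1)$. I do not anticipate a genuine obstacle; the only points deserving a word of care are confirming that $\|F\|^2$ is independent of $t$ (immediate from the displayed formula in Theorem~\ref{thm:FCondNumber}) and that the admissible range of initial step sizes is exactly $\{1, 2, \ldots, n-1\}$, so that the telescoping chain has precisely the stated length.
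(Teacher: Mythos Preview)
Your proposal is correct and follows exactly the approach the paper takes: the paper's own proof simply notes that $\kappa(t)$ is well-defined by Corollary~\ref{Cor:step} and that the conclusions follow from Theorem~\ref{thm:FCondNumber}, leaving the telescoping difference $g(t+1)-g(t)=|c_{t+1}|^2\,(1-|c_0|^2)/|c_0|^2$ implicit. You have merely spelled out that computation explicitly, which is entirely appropriate.
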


\begin{proof}
Note that by Corollary \ref{Cor:step}, $\kappa(A)$
is the same for all $A \in S_t$, so $\kappa(t)$ is 
well-defined.  The conclusions follow from
Theorem \ref{thm:FCondNumber}.
\end{proof}

One of our new results is to compare the 
condition number of a Fielder companion matrix
of $p(x)$ to the condition number of other companion matrices of $p(x)$.
In particular, if a Fiedler companion
matrix $F$ has a smaller condition number than another
companion matrix $C$ to the same polynomial $p(x)$, then  the ratio $\frac{\kappa(C)}{\kappa(F)}$
can be bounded.
This result is similar in spirit to
\cite[Theorem 4.12]{dTDP}.

\begin{theorem}
\label{HeftyThm}
Let $p(x) = x^n + c_{n-1}x^{n-1} + \cdots + c_1x + c_0$ be a polynomial over $\mathbb{R}$ with $n\geq 2$, and $c_0 \neq 0$.  Let $F$ be a Fielder companion
matrix to $p(x)$.
Further, suppose $C$ is any companion matrix to $p(x)$ 
whose lower Hessenberg form is
\renewcommand{\arraystretch}{1.5}
\begin{equation*}
C = \left[
\begin{array}{c|c|c}
	{\bf 0} & I_m & O \\[6pt] \hline
	{\bf u}_C & H_C & I_{n-m-1} \\[3pt] \hline 
	{-c_0} & {\bf y}^T_C & {\bf 0}^T
\end{array}
\right]
\end{equation*}
such that either ${\bf u}_C$ or ${\bf y}^T_C$ is the zero vector. 
If $\kappa(F) \leq \kappa(C)$, then
\begin{equation*}
1 \leq \frac{\kappa(C)}{\kappa(F)} \leq \kappa(F) .
\end{equation*}
\end{theorem}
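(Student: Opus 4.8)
The plan is to exploit the fact that $C$, being a companion matrix whose Hessenberg form has either ${\bf u}_C$ or ${\bf y}^T_C$ equal to zero, satisfies exactly the hypotheses used in the proof of Lemma~\ref{thm:inventries}. Running that same computation (via Lemma~\ref{lem:inverse}) shows that $C^{-1}$ has the form~\eqref{matrixinverse}: namely $n-1$ entries equal to $1$, the entry $-\tfrac{1}{c_0}$, a collection of entries of the form $-\tfrac{c_i}{c_0}$ coming from $\tfrac{1}{c_0}{\bf u}_C$, and the remaining nonzero entries being among $\{c_1,\ldots,c_{n-1}\}$ coming from $-H_C$. The crucial point is that $H_C$ need not be a sub-permuted version of a lattice path, so the split of the coefficients $c_1,\ldots,c_{n-1}$ between the "divided by $c_0$" group and the "undivided" group is no longer forced to be the initial-segment split $\{c_1,\ldots,c_t\}$ versus $\{c_{t+1},\ldots,c_{n-1}\}$ that a Fiedler matrix gives; but it is still \emph{some} partition of the index set $\{1,\ldots,n-1\}$ into two blocks, say indices in a set $S$ get divided by $c_0$ and indices in the complement do not. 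So
\begin{equation*}
\|C^{-1}\|^2 = (n-1) + \frac{1 + \sum_{i\in S}|c_i|^2}{|c_0|^2} + \sum_{i\notin S}|c_i|^2 .
\end{equation*}
Since $C$ is (equivalent to) a matrix with the same multiset of entries as $F$ apart from a possibly nontrivial $H_C$ — actually one must be slightly careful here: $C$ is a companion matrix but need not be unit sparse, so $\|C\|^2$ could differ from $\|F\|^2$. However, inspecting~\eqref{matrixinverse} applied to $C$, every nonzero entry of $C$ other than the $n-1$ ones and $-c_0$ lies in $H_C$ or in ${\bf u}_C$/${\bf y}_C^T$, and the entries of $C^{-1}$ we have just listed are built from exactly those same entries; in particular $H_C$ has entries among $\{-c_1,\ldots,-c_{n-1}\}$ and ${\bf u}_C$ has entries among the same set, so $C$ is in fact unit sparse and $\|C\|^2 = \|F\|^2 =: N$. (If the intended hypothesis is broader, one replaces $N$ by $\|C\|^2$ and the inequality $\|C\|^2\ge \|F\|^2$, established by the same entrywise bookkeeping, suffices for what follows.)

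Next I would compare $\kappa(C)^2 = N\cdot\|C^{-1}\|^2$ with $\kappa(F)^2 = N\cdot\|F^{-1}\|^2$, where by Theorem~\ref{thm:FCondNumber} $\|F^{-1}\|^2 = (n-1) + \tfrac{1+\sum_{i\le t}|c_i|^2}{|c_0|^2} + \sum_{i>t}|c_i|^2$. The key inequality is the elementary observation that moving a coefficient $|c_i|^2$ from the undivided group into the divided group changes $\|C^{-1}\|^2$ by $|c_i|^2(\tfrac{1}{|c_0|^2}-1)$, and conversely; hence for any partition $S$,
\begin{equation*}
\min\!\left(1,\tfrac{1}{|c_0|^2}\right)\Big(\textstyle\sum_{i=1}^{n-1}|c_i|^2\Big) \;\le\; \frac{1 + \sum_{i\in S}|c_i|^2}{|c_0|^2}-\frac{1}{|c_0|^2} + \sum_{i\notin S}|c_i|^2 \;\le\; \max\!\left(1,\tfrac{1}{|c_0|^2}\right)\Big(\textstyle\sum_{i=1}^{n-1}|c_i|^2\Big).
\end{equation*}
This bounds $\|C^{-1}\|^2$ between two expressions that differ from $\|F^{-1}\|^2$ by at most a multiplicative factor controlled by $\max(|c_0|^2,|c_0|^{-2})$. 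The lower bound $1\le \kappa(C)/\kappa(F)$ is immediate from the hypothesis $\kappa(F)\le\kappa(C)$. For the upper bound $\kappa(C)/\kappa(F)\le\kappa(F)$, i.e. $\kappa(C)\le\kappa(F)^2$, I would show $\|C^{-1}\|^2 \le \kappa(F)^2 / N = \|F^{-1}\|^2 \cdot (\,\|F^{-1}\|^2 N / \text{something}\,)$ — more directly, $\kappa(C)^2 = N\|C^{-1}\|^2$ and $\kappa(F)^4 = N^2\|F^{-1}\|^4$, so it suffices to prove $\|C^{-1}\|^2 \le N\,\|F^{-1}\|^4$, which I would derive from the bound above by noting that each of $N$ and $\|F^{-1}\|^2$ dominates the relevant pieces of $\|C^{-1}\|^2$: the $(n-1)$ term, the $\tfrac{1}{|c_0|^2}$ term, and each $|c_i|^2/|c_0|^{\pm2}$ term is $\le$ a product of one factor from $N$ (which contains $|c_0|^2$ and all $|c_i|^2$ and $n-1$) and one factor from $\|F^{-1}\|^2$ (which contains $\tfrac{1}{|c_0|^2}$).

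The main obstacle I anticipate is the bookkeeping in that last step: turning the crude two-sided bound on $\|C^{-1}\|^2$ into the clean statement $\kappa(C)\le\kappa(F)^2$ requires carefully matching each term of $\|C^{-1}\|^2$ against a product of a term of $\|F\|^2$ and a term of $\|F^{-1}\|^2$, and handling the two regimes $|c_0|<1$ and $|c_0|>1$ (the case $|c_0|=1$ being trivial since then all the partitions give the same value and $\kappa(C)=\kappa(F)$). The inequality $\kappa(F)\ge 1$ always (true for any invertible matrix, since $\|A\|\,\|A^{-1}\|\ge\|AA^{-1}\|=\sqrt n\ge1$, or more simply $\ge1$) is what makes $\kappa(F)^2\ge\kappa(F)$ and lets the sought bound absorb the constants. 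I would organize the final inequality as: $\kappa(C)^2 = \|C\|^2\|C^{-1}\|^2 \le \|F\|^2 \cdot \big(\|F\|^2\|F^{-1}\|^2\big) \cdot \|F^{-1}\|^2 / \|F\|^2 = \|F\|^2\|F^{-1}\|^2 \cdot \|F^{-1}\|^2$ — wait, that is $\kappa(F)^2\|F^{-1}\|^2$, not $\kappa(F)^4$; so the honest claim to prove is $\|C^{-1}\|^2 \le \|F\|^2\|F^{-1}\|^2 = \kappa(F)^2$, i.e. $\|C^{-1}\|\le\kappa(F)$, from which $\kappa(C) = \|C\|\,\|C^{-1}\| \le \|F\|\cdot\kappa(F)$ and then dividing by $\kappa(F)=\|F\|\,\|F^{-1}\|$ gives $\kappa(C)/\kappa(F)\le 1/\|F^{-1}\| \cdot \|F\|\,\|F^{-1}\|\cdot\ldots$; this is the delicate point and I would pin down the exact chain of inequalities by writing $\|C^{-1}\|^2$ as a sum of terms each bounded by the corresponding product term in $\|F\|^2\|F^{-1}\|^2$ expanded as a double sum, which is where the real work lies.
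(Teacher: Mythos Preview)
Your setup matches the paper's exactly: reduce via equivalence to $\mathbf{u}_C=\mathbf{0}$ (or $\mathbf{y}_C=\mathbf{0}$), use Lemma~\ref{lem:inverse} to read off the entries of $C^{-1}$, note that $C$ is unit sparse so $\|C\|=\|F\|$, and reduce the upper bound to
\[
\|C^{-1}\|^2 \;\le\; \|F\|^2\,\|F^{-1}\|^4.
\]
You identify this target correctly in the middle of your write-up, and you even name the method the paper uses: match each of the four summands
\[
(n-1),\qquad \frac{1}{|c_0|^2},\qquad \sum_{c_i\in \mathbf{y}_C^T}\frac{|c_i|^2}{|c_0|^2},\qquad \sum_{c_k\in H_C}|c_k|^2
\]
against distinct nonnegative terms in the expansion of $\|F\|^2\|F^{-1}\|^4$. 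The paper does precisely this: it partially expands $\|F\|^2\|F^{-1}\|^4$ to exhibit the terms $(n-1)^3$, $(n-1)^2|c_0|^{-2}$, $(n-1)\bigl(\sum_i|c_i|^2\bigr)|c_0|^{-2}$, and $(n-1)^2\sum_j|c_j|^2$, and since $n-1\ge 1$ each of your four summands is dominated by the corresponding listed term. That is the entire ``real work'' you allude to, and it is two lines.

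The gap in your proposal is that you never carry this out, and worse, you talk yourself out of the correct target in the last paragraph. Your ``honest claim'' $\|C^{-1}\|^2\le \|F\|^2\|F^{-1}\|^2$ is a \emph{stronger} inequality than needed (it would give $\kappa(C)/\kappa(F)\le \|F\|\le \kappa(F)$, using $\|F^{-1}\|\ge 1$), and you never establish it; the paper does not attempt it either. The $\min/\max$ detour involving $|c_0|^{\pm 2}$ and the case split $|c_0|\lessgtr 1$ are both unnecessary: the term-matching argument is uniform in $c_0$ because every summand of $\|C^{-1}\|^2$ already appears, up to a factor of $(n-1)\ge 1$, in the expansion of $\|F\|^2\|F^{-1}\|^4$ regardless of which partition $S$ you have. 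Drop the last paragraph, keep the target $\|C^{-1}\|^2\le \|F\|^2\|F^{-1}\|^4$, and simply write down the four term-by-term bounds.
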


\begin{proof}
The conclusion that $1 \leq \frac{\kappa(C)}{\kappa(F)}$ is immediate from the 
hypothesis that $\kappa(F) \leq \kappa(C)$.     

By  Theorem~\ref{thm:Fiedler}
and Lemma~\ref{equivalentcondition}, we can assume $F$ is in unit lower Hessenberg form. As such, 
let \renewcommand{\arraystretch}{1.5}
\begin{equation*}
F = \left[
\begin{array}{c|c|c}
	{\bf 0} & I_l & O\\[6pt] \hline
	{\bf u}_F & H_F & I_{n-l-1} \\[3pt] 
\hline 
	{-c_0} & {\bf y}^T_F & {\bf 0}^T
\end{array}
\right].
\end{equation*}
and let $t$ be the initial step size of $F$.  
We want to show that
\begin{equation*}
    \frac{||C||\cdot||C^{-1}||}{||F||\cdot||F^{-1}||} \leq ||F||\cdot||F^{-1}||.
\end{equation*}
Since $C$ and $F$ are unit sparse companion matrices, $||C||=||F||.$
It suffices to show that
\begin{equation*}
\label{Result}
||C^{-1}|| \leq ||F||\cdot||F^{-1}||^2.
\end{equation*}

Using equivalence, we may assume without loss of generality that ${\bf u}_C = {\bf 0}$. 
By Lemma~\ref{lem:inverse},
\renewcommand{\arraystretch}{1.5}
\begin{equation*}
C^{-1} = \left[
\begin{array}{c|c|c}
	\frac{1}{c_0}{\bf y}^T_C & {\bf 0}^T & {-\frac{1}{c_0}} \\[6pt] \hline
	I_m & O& {\bf 0} \\[3pt] \hline 
    - H_C & I_{n-m-1} & 
    {\bf 0} 
\end{array}
\right].
\end{equation*}
since ${\bf u}_C=\bf{0}$.
Then
\begin{equation}\label{CSummand}
    ||C^{-1}||^2 
   =  (n-1) + \left(\frac{1}{c_0}\right)^2 + \sum_{c_i \in {\bf y}^T_C} \left|\frac{c_i}{c_0}\right|^2 +  \sum_{c_k \in H_C} |c_{k}|^2.
\end{equation}
where $c\in H$ (resp. $c\in \mathbf{y}$) means $-c$ is an entry in $H$ (resp. $\mathbf{y}$).
On the other hand, using Lemma \ref{thm:inventries},
\begin{equation}
||F||^2\cdot ||F^{-1}||^4 =
\left[(n-1) + \sum_{i=0}^{n-1} |c_i|^2\right] 
\left[(n-1) + \left(\frac{1}{c_0}\right)^2 + \sum_{i=1}^{t}\left|\frac{c_i}{c_0}\right|^2 + \sum_{j=t+1}^{n-1}|{c_j}|^2 \right]^2.\label{MSummand}
\end{equation}
We want to show that $||C^{-1}|| \leq ||F||\cdot||F^{-1}||^2$ which is equivalent
to showing that $||C^{-1}||^2 \leq 
||F||^2\cdot ||F^{-1}||^4$.  To do this, 
for each of the four different summands in \eqref{CSummand}, we show that there exists distinct terms in $||F||^2\cdot||F^{-1}||^4$ that are greater than or equal to the summand.  Here we rely on the
fact that there
are no negative summands in \eqref{MSummand}.

Partially expanding out \eqref{MSummand}, we have
\begin{multline*}
||F||^2 \cdot ||F^{-1}||^4  = 
(n-1)^3 + (n-1)^2\left(\frac{1}{c_0}\right)^2 +
(n-1)\left(\sum_{i=0}^{n-1} |c_i|^2\right)\left(\frac{1}{c_0}\right)^2\\
+ (n-1)^2\sum_{j=0}^{n-1}|{c_j}|^2 
+ \mbox{other non-negative terms.}
\end{multline*}
Consequently,
\small
\begin{eqnarray*}
||C^{-1}||^2 & = & 
(n-1) + \left(\frac{1}{c_0}\right)^2 + \sum_{c_i \in {\bf y}^T_C} \left|\frac{c_i}{c_0}\right|^2 +  \sum_{c_k \in H_C} |c_{k}|^2 \\
& \leq & (n-1)^3 + (n-1)^2\left(\frac{1}{c_0}\right)^2 +
(n-1)\left(\sum_{i=0}^{n-1} |c_i|^2\right)\left(\frac{1}{c_0}\right)^2+ (n-1)^2\sum_{j=0}^{n-1}|{c_j}|^2\\
&\leq &||F||^2 \cdot ||F^{-1}||^4. 
\end{eqnarray*}
\normalsize
\end{proof}


\section{Striped Companion Matrices}\label{sec:stripe}

 In this section we explore a particular class
 of companion matrices
  known as striped companion matrices, which were introduced in \cite{EKSV}.  A striped companion matrix to a polynomial $p(x) = x^n + c_{n-1}x^{n-1} + \cdots + c_1x + c_0$ has the property that the coefficients $-c_0, -c_1, \ldots, -c_{n-1}$ form horizontal stripes in the matrix.
In particular, if $\mathbf{t}=(t_1,t_2,\ldots,t_r)$ is an ordered $r$-tuple of positive integers with 
$t_1+t_2+\cdots+t_r=n$, and $t_1\geq t_i$ for $2\leq i\leq n$, then we define the \emph{striped companion matrix} $S_n(\mathbf{t})$
to be the companion matrix of  unit Hessenberg form 
\begin{eqnarray}\label{Rmatrix}
S_n(\mathbf{t})=\left[\begin{array}{c|c}
\mathbf{0}\quad I_{t_1-1}&O \\  \hline 
\multirow{2}{*}{$R$}  & I_{n-t_1}\\
 & \mathbf{0}^T
\end{array}\right]
\end{eqnarray}
with the $(n-t_1+1)\times t_1$ matrix $R$  having $r$ nonzero
rows and with the $i$\textsuperscript{th} nonzero row of $R$ having $t_i$ variables in the
first $t_i$ positions and $t_i-1$ zero rows immediately above it in $R$, for~$1< i\leq r$.
Note that this implies the first row of $R$ is a nonzero row with $t_1$ leading nonzero entries. For example,
\footnotesize
\[ 
S_7(3,2,2) = 
\begin{small}\left[\begin{array}{rrrrrrr}
0 & 1 & 0 & 0 & 0 & 0 & 0 \\
0 & 0 & 1 & 0 & 0 & 0 & 0 \\
-c_4 & -c_5 & -c_6 & 1 & 0 & 0 & 0\\
0 & 0 & 0 & 0 & 1 & 0 & 0\\
-c_2 & -c_3 & 0 & 0 & 0 & 1 & 0\\
0 & 0 & 0 & 0 & 0 & 0 & 1 \\
-c_0 & -c_1 & 0 & 0 & 0 & 0 & 0
\end{array}\right]
\end{small}, ~~\mbox{and}~~
S_8(3,3,2)=
\begin{small}
\left[\begin{array}{rrrrrrrr}
0 & 1 & 0 & 0 & 0 & 0&0&0 \\
0 & 0 & 1 & 0 & 0 & 0&0&0 \\
-c_5 & -c_6 & -c_7 & 1 & 0 & 0&0&0 \\
0 & 0 & 0 & 0 & 1 & 0&0&0 \\
0 & 0 & 0 & 0 & 0 & 1 &0&0\\
-c_2 & -c_3 & -c_4 & 0 & 0 & 0 &1&0\\ 
0 & 0 & 0 & 0 & 0 & 0 &0&1\\
-c_0 & -c_1 & 0 & 0 & 0 & 0 &0&0
\end{array}\right]\end{small}.
\]
\normalsize

As the next theorem shows,
in some cases the 
stripped companion matrices can have a better
condition number than a
Fielder companion matrix.

\begin{theorem}
\label{RectangleInequality}
Suppose $n=k(m+1)$ for some positive $k,m\in \mathbb{Z}$ and 
$p(x) = x^n + c_{n-1}x^{n-1} 
+ \cdots + c_1x + c_0$
with $c_0 = 1$, $c_1,\ldots,c_{n-1} \in \mathbb{R}$. 
There exists a striped companion matrix $S=S_n(k,\ldots,k)$  
for $p(x)$ such that $\kappa(S) \leq \kappa(F)$ for every Fiedler companion matrix $F$ if and only if 
\begin{equation}\label{eq:stripe}
\sum_{j=1}^m\left(\sum_{i=1}^{k-1} |c_ic_{jk} - c_{jk + i}|^2\right) \leq \sum_{j=1}^m\left(\sum_{i=1}^{k-1}|c_{jk + i}|^2\right) .
\end{equation}
\end{theorem}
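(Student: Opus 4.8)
The plan is to compute $\kappa(S)^2$ explicitly and compare it to $\kappa(F)^2$ for a Fiedler matrix $F$ of optimal initial step size, using the fact that $c_0 = 1$. First I would observe that since $c_0 = 1$, Corollary~\ref{Dopico45} tells us that all Fiedler companion matrices to $p(x)$ have the same condition number regardless of initial step size; call this common value $\kappa(F)$. By Theorem~\ref{thm:FCondNumber} with $|c_0| = 1$, we get $\kappa(F)^2 = \|F\|^2\bigl((n-1) + 1 + |c_1|^2 + \cdots + |c_{n-1}|^2\bigr) = \|F\|^4$, where $\|F\|^2 = (n-1) + 1 + |c_1|^2 + \cdots + |c_{n-1}|^2$. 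So the target inequality $\kappa(S) \le \kappa(F)$ is equivalent to $\|S^{-1}\| \le \|F^{-1}\|^2 = \|F\|^2$ (since $\|S\| = \|F\|$, both being unit sparse companion matrices to the same polynomial, and $\|F\| = \|F^{-1}\|$ when $c_0 = 1$ by the formula above).

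Next I would compute $S^{-1}$ using Lemma~\ref{lem:inverse}. For $S = S_n(k,\ldots,k)$ in the form \eqref{Rmatrix}, we have $m+1 = n - (n - t_1) $... more precisely $t_1 = k$, so the identity block at top is $I_{k-1}$, meaning the parameter "$m$" of Lemma~\ref{lem:inverse} equals $k-1$. The vector $\mathbf{u}$ is the first column of $R$ below the top block, $\mathbf{y}^T$ is the last row stripped of $-c_0$, and $H$ is the middle block of $R$. Since $c_0 = 1$, Lemma~\ref{lem:inverse} gives $S^{-1}$ with blocks $\mathbf{y}^T$, $-\mathbf{u}\mathbf{y}^T - H$, $\pm 1$ entries, identity blocks, and $\mathbf{u}$. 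The key point: the block $-\mathbf{u}\mathbf{y}^T - H$ is where the products $c_i c_{jk}$ meet the coefficients $c_{jk+i}$, producing entries of the form $c_i c_{jk} - c_{jk+i}$ (up to sign) — this is exactly where the left-hand side of \eqref{eq:stripe} comes from. I would carefully index the nonzero rows of $R$: the $j$-th nonzero row (for $j \ge 2$) contributes coefficients $-c_{(r-j)k}, \ldots$ in its first $k$ positions, and when multiplied against $\mathbf{y}^T = (-c_1, \ldots, -c_{k-1}, 0, \ldots)$ (the first row's tail), the Schur-complement block picks up $c_i c_{jk} - c_{jk+i}$ in the relevant positions. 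Collecting all nonzero entries of $S^{-1}$: there are $n-1$ ones, one entry $-1/c_0 = -1$, the entries $-c_i/c_0 = -c_i$ for the top row ($i = 1, \ldots, k-1$), the entries $-c_i$ for $i$ in the bottom $\mathbf{u}$-positions of the stripes $2,\ldots,r$, the "diagonal-ish" entries $c_{jk}$ surviving from $-H$ in the first column region, and the cross terms $c_ic_{jk} - c_{jk+i}$.

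Then I would write down $\|S^{-1}\|^2$ as a sum of squares of these entries and $\|F\|^2 = (n-1) + |c_1|^2 + \cdots + |c_{n-1}|^2 + 1$, and subtract. The terms that cancel are: the $(n-1)$ ones, the $+1$, the $|c_i|^2$ for $i = 1,\ldots,k-1$ (top row), the $|c_{jk}|^2$ for $j = 1, \ldots, m$ (the surviving $-H$ entries — note $r = k$ here so the nonzero rows of $R$ are indexed so that stripe boundaries land at multiples of $k$), and the remaining single-coefficient entries. What is left on the $S^{-1}$ side is precisely $\sum_{j=1}^m \sum_{i=1}^{k-1} |c_i c_{jk} - c_{jk+i}|^2$, and what is left on the $\|F\|^2$ side is $\sum_{j=1}^m \sum_{i=1}^{k-1}|c_{jk+i}|^2$. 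Hence $\|S^{-1}\|^2 \le \|F\|^2$ iff \eqref{eq:stripe} holds, which is the claim. I would then remark that $S_n(k,\ldots,k)$ is indeed a valid striped companion matrix (each stripe has length $k = t_1$, and $n = k(m+1)$ forces exactly $m+1$ stripes, satisfying $t_1 \ge t_i$ trivially).

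The main obstacle will be the bookkeeping in the Schur-complement block $-\mathbf{u}\mathbf{y}^T - H$: getting the indices of the coefficients in each nonzero row of $R$ exactly right, confirming that the subdiagonal constraint from Theorem~\ref{thm:sparse} ($-c_{n-1-k}$ on the $k$-th subdiagonal) forces the stripe in nonzero row $j$ to carry coefficients $c_{(k-j)k}, c_{(k-j)k+1}, \ldots, c_{(k-j)k+(k-1)}$ (so that $\mathbf{u}$'s nonzero entries are exactly $c_k, c_{2k}, \ldots, c_{mk}$ and the "$H$ survivors" are exactly these same $c_{jk}$), and checking that no unexpected cross-terms appear (in particular that $\mathbf{u}\mathbf{y}^T$ is rank one so only the first $k-1$ columns of each stripe-block get modified). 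Once the structure of $S^{-1}$ is pinned down, the comparison with $\|F\|^2$ is a direct term-by-term cancellation with no inequality slack beyond \eqref{eq:stripe} itself, which is why the equivalence (iff) goes through cleanly in both directions.
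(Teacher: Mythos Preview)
Your approach is correct and essentially the same as the paper's: reduce to comparing $\|S^{-1}\|^2$ with $\|F^{-1}\|^2$ (using $\|S\|=\|F\|$ and, since $c_0=1$, the fact that all Fiedler matrices share the same condition number), compute $S^{-1}$ via Lemma~\ref{lem:inverse}, and observe that the two norms differ only in the block coming from $-\mathbf{u}\mathbf{y}^T-H$, which produces exactly the cross terms $c_ic_{jk}-c_{jk+i}$. Two minor slips to clean up: the line ``$\|S^{-1}\|\le\|F^{-1}\|^2$'' should read $\|S^{-1}\|^2\le\|F^{-1}\|^2$ (or equivalently $\|S^{-1}\|\le\|F^{-1}\|$); and the entries $c_{jk}$ in $S^{-1}$ come from the column $\tfrac{1}{c_0}\mathbf{u}$, not from $-H$ (the block $H$ carries only the $c_{jk+i}$ with $1\le i\le k-1$).
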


\begin{proof}
Let $S=S_{k(m+1)}(k,\ldots,k)$, and let $F$ be a Fiedler companion matrix. 
Since $||S||=||F||$ as noted in Remark~\ref{rem:same},  it suffices to show that $||S^{-1}|| \leq ||F^{-1}||$ if and only if 
equation (\ref{eq:stripe}) holds.
By Lemma~\ref{lem:inverse}, 
\begin{equation*} 
\label{InverseForU}
S^{-1} = \footnotesize \left[
\begin{array}{c|c|c}
	\begin{matrix} -c_{1}\;\;\;\;\;\;\;\;\;\;\;\; & \;\;\;\;\;\;-c_{2}\;\;\;\;\;\;\;\;\; & \;\;\;\ldots\;\;\;\;\;\; & \;\;\;\;\;\;\;\;\;-c_{k-1}\end{matrix} & \mathbf{0}^T & -1 \\[6pt] \hline
	I_{k-1} & O & \mathbf{0} \\[3pt] \hline 
	\begin{matrix}
-c_1c_{mk} + c_{mk+1} & -c_2c_{mk} + c_{mk+2} & \ldots & -c_{k-1}c_{mk} + c_{(m+1)k-1}  \\
0 & 0 & \ldots & 0  \\
\vdots & \vdots & \ldots & \vdots  \\
\vdots & \vdots & \ldots & \vdots  \\
0 & 0 & \ldots & 0  \\
-c_1c_{2k} + c_{2k+1} & -c_2c_{2k} + c_{2k+2} & \ldots & -c_{k-1}c_{2k} + c_{3k-1} \\
0 & 0 & \ldots & 0  \\
\vdots & \vdots & \ldots & \vdots  \\
0 & 0 & \ldots & 0  \\
-c_1c_{k} + c_{k+1} & -c_2c_{k} + c_{k+2} & \ldots & -c_{k-1}c_{k} + c_{2k-1}  \\
0 & 0 & \ldots & 0  \\
\vdots & \vdots & \ldots & \vdots  \\
0 & 0 & \ldots & 0 
\end{matrix} & I_{mk} & \begin{matrix}
-c_{mk} \\
0 \\
\vdots \\ \vdots \\ 
0 \\
-c_{2k} \\
0 \\
\vdots \\ 
0 \\
-c_{k} \\
0 \\
\vdots \\ 
0 
\end{matrix}
\end{array}
\right]. 
\end{equation*}
\normalsize
Thus 
$$
||S^{-1}||^2 = n + 
\sum_{j=1}^{k-1} |c_j|^2
+\sum_{j=1}^{m} |c_{jk}|^2
+\sum_{j=1}^m\left(\sum_{i=1}^{k-1} |c_ic_{jk} - c_{jk + i}|^2\right). 
$$
By Theorem \ref{thm:FCondNumber}, 
$$
||F^{-1}||^2 = n +
\sum_{j=1}^{k-1} |c_j|^2
+\sum_{j=1}^{m} |c_{jk}|^2
+\sum_{j=1}^m\left(\sum_{i=1}^{k-1}|c_{jk + i}|^2\right). 
$$
Therefore 
$\kappa(S)\leq \kappa(F)$  if and only if
\begin{equation*}
\sum_{j=1}^m\left(\sum_{i=1}^{k-1} |c_ic_{jk} - c_{jk + i}|^2\right) \leq \sum_{j=1}^m\left(\sum_{i=1}^{k-1}|c_{jk + i}|^2\right) .
\end{equation*}
\end{proof}

We can deduce the following
corollary.

\begin{corollary} 
\label{CoeffCor}
Suppose $n = k(m+1)$ for some $m,k\in\mathbb{Z}$ and
$ 
p(x) = x^n + c_{n-1}x^{n-1} 
+ \cdots + c_1x + c_0
$ 
with $c_0 = 1$, $c_1,\ldots,c_{n-1} \in \mathbb{R}$.  Suppose $F$ is any Fiedler
companion matrix for $p(x)$. If 
\begin{equation*}
|c_ic_{jk} - c_{jk+i}| \leq |c_{jk+i}|,
~~\mbox{for $1\leq j\leq m$ and $1 \leq i \leq k-1$},
\end{equation*}
then there exists a 
striped companion matrix $S=S_n(k,\ldots,k)$, such that $\kappa(S) \leq \kappa(F)$. 
\end{corollary}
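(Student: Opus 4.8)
The plan is to derive Corollary~\ref{CoeffCor} directly from Theorem~\ref{RectangleInequality}, which already contains all the analytic content. The key observation is that the hypothesis of the corollary is a collection of pointwise inequalities indexed by exactly the same pairs $(i,j)$ that appear in the double sum \eqref{eq:stripe}, so the corollary amounts to the ``if'' direction of that theorem applied after a term-by-term comparison.

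Concretely, the first step is to square the hypothesis. Since $|c_ic_{jk}-c_{jk+i}|$ and $|c_{jk+i}|$ are both nonnegative, the inequality $|c_ic_{jk}-c_{jk+i}| \leq |c_{jk+i}|$ implies $|c_ic_{jk}-c_{jk+i}|^2 \leq |c_{jk+i}|^2$ for every $j$ with $1\leq j\leq m$ and every $i$ with $1\leq i\leq k-1$. The second step is to sum these $m(k-1)$ inequalities: adding over $i=1,\ldots,k-1$ and then over $j=1,\ldots,m$ yields
\begin{equation*}
\sum_{j=1}^m\left(\sum_{i=1}^{k-1} |c_ic_{jk} - c_{jk + i}|^2\right) \leq \sum_{j=1}^m\left(\sum_{i=1}^{k-1}|c_{jk + i}|^2\right),
\end{equation*}
which is precisely \eqref{eq:stripe}. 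The final step is to invoke Theorem~\ref{RectangleInequality}: since \eqref{eq:stripe} holds, there exists a striped companion matrix $S = S_n(k,\ldots,k)$ to $p(x)$ with $\kappa(S) \leq \kappa(F)$ for every Fiedler companion matrix $F$, and in particular for the matrix $F$ fixed in the statement of the corollary.

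There is essentially no obstacle here: the only points worth checking are that the index ranges match, so that summing the hypothesis produces exactly the left- and right-hand sides of \eqref{eq:stripe} with no leftover terms, and that we use only the (easier) sufficiency direction of Theorem~\ref{RectangleInequality}. One might also remark that the hypothesis is genuinely stronger than needed---pointwise domination is sufficient but not necessary for the summed inequality---so the corollary isolates a natural, easily checkable special case rather than the full characterization.
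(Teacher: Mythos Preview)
Your proof is correct and follows exactly the route the paper intends: square the pointwise hypotheses, sum over the index set $1\le j\le m$, $1\le i\le k-1$ to obtain \eqref{eq:stripe}, and then apply the sufficiency direction of Theorem~\ref{RectangleInequality}. The paper itself offers no separate argument for this corollary beyond ``We can deduce the following,'' so your write-up is simply a spelled-out version of that deduction.
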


\begin{example} Let 
\begin{equation*}
p(x) = x^{9} + 8x^{8} + 6x^{7} + 2x^{6} + 5x^{5} + 8x^{4} + 3x^{3} + 3x^{2} + 2x + 1.
\end{equation*}
Note that the inequalities in Corollary~\ref{CoeffCor} hold.
Let $F$ be any Fiedler companion to $p(x)$ 
and consider the striped companion matrix $S=S_9(3,3,3)$, i.e.,
\[ S_9(3,3,3)=
\begin{small}
\left[\begin{array}{rrrrrrrrr}
0 & 1 & 0 & 0 & 0 & 0&0&0&0 \\
0 & 0 & 1 & 0 & 0 & 0&0&0&0 \\
-2 & -6 & -8 & 1 & 0 & 0&0&0 &0\\
0 & 0 & 0 & 0 & 1 & 0&0&0&0 \\
0 & 0 & 0 & 0 & 0 & 1 &0&0&0\\
-3 & -8 & -5 & 0 & 0 & 0 &1&0&0\\ 
0 & 0 & 0 & 0 & 0 & 0 &0&1&0\\
0 & 0 & 0 & 0 & 0 & 0 &0&0&1\\
-1 & -2 & -3& 0 & 0 & 0 &0&0&0
\end{array}\right]\end{small}.
\]
Then $||S||=||F||=\sqrt{224}$,
but $\kappa(S)=\sqrt{224}\sqrt{63}<\kappa(F) =
\sqrt{224}\sqrt{224}.$ 
\end{example}

One extreme example of how the inequalities in Corollary~\ref{CoeffCor} can be met is if $c_0 =1$ and
the striped companion matrix in line (\ref{Rmatrix}) 
has rank$(R)=1$. In this case, the inequalities
are trivially met as described in the following corollary. A more general result can be developed for striped companion matrices with differing stripe lengths; e.g., see \cite[Section 4.2]{C}.

\begin{corollary}
\label{BThm}
Given $p(x) = x^n + c_{n-1}x^{n-1} + c_{n-2}x^{n-2} + \cdots + c_1x + c_0$
with $c_0 = 1$, and $c_1, \ldots, c_{n-1} \in \mathbb{R}$, let $S$ be a striped companion matrix to the polynomial $p(x)$. If
\begin{equation*}
S = \left[ \begin{array}{c|c}
	\mathbf{0} \;\; I_m & O \\[6pt] \hline 
	& \\[-10pt]
	R & \begin{matrix} 
    I_{n-m-1} \\ 
    \mathbf{0}^T 
    \end{matrix}
\end{array} \right]
\end{equation*}
with 
rank$(R)=1$,
then $\kappa(S) \leq \kappa(F)$ for any Fiedler companion matrix $F$.
\end{corollary}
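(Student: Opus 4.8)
The plan is to compare $\|S^{-1}\|$ directly with $\|F^{-1}\|$, using Lemma~\ref{lem:inverse} for the former and Theorem~\ref{thm:FCondNumber} for the latter, and to show that the rank hypothesis forces the one ``large'' block of $S^{-1}$ to vanish. Since $S$ is a unit sparse companion matrix to $p(x)$, Remark~\ref{rem:same} gives $\|S\| = \|F\|$, so it is enough to prove $\|S^{-1}\| \le \|F^{-1}\|$. Write $S$ in the block form of Lemma~\ref{lem:inverse}, so that $R = \left[\begin{smallmatrix}\mathbf{u} & H \\ -c_0 & \mathbf{y}^T\end{smallmatrix}\right]$ with $\mathbf{u}$ a column of length $n-m-1$, $\mathbf{y}^T$ a row of length $m$, and $H$ the remaining $(n-m-1)\times m$ block. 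Since $c_0 = 1 \ne 0$, Lemma~\ref{lem:inverse} applies, and reading off the Frobenius norm of the displayed inverse (with $c_0=1$; the identity blocks $I_m$, $I_{n-m-1}$ and the corner entry $-1$ together contribute the constant $n$) gives
$$\|S^{-1}\|^2 = n + \|\mathbf{u}\|^2 + \|\mathbf{y}\|^2 + \|\mathbf{u}\mathbf{y}^T + H\|^2 .$$

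Next I would exploit $\operatorname{rank}(R) = 1$. Since $R$ contains the entry $-c_0 = -1 \ne 0$ in its lower-left corner, it is not the zero matrix, so rank one means every $2 \times 2$ minor of $R$ vanishes. The minor on rows $i$ and $n-m$ and columns $1$ and $j+1$ is the determinant of $\left[\begin{smallmatrix} u_i & H_{ij} \\ -c_0 & y_j \end{smallmatrix}\right]$, namely $u_i y_j + c_0 H_{ij}$; setting this to zero for all $i,j$ yields $\mathbf{u}\mathbf{y}^T = -c_0 H = -H$. Consequently $\|\mathbf{u}\mathbf{y}^T + H\|^2 = 0$ and $\|S^{-1}\|^2 = n + \|\mathbf{u}\|^2 + \|\mathbf{y}\|^2$.

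To finish, I would bound $\|\mathbf{u}\|^2 + \|\mathbf{y}\|^2$ using the fact that the nonzero entries of $R$ are exactly $-c_0, -c_1, \ldots, -c_{n-1}$, one of each (all $n-1$ ones of $S$ lie in the identity blocks outside $R$), so that $\|\mathbf{u}\|^2 + \|\mathbf{y}\|^2 + \|H\|^2 + c_0^2 = \sum_{i=0}^{n-1} c_i^2$; with $c_0 = 1$ this gives $\|\mathbf{u}\|^2 + \|\mathbf{y}\|^2 \le \sum_{i=1}^{n-1} c_i^2$. On the other side, Theorem~\ref{thm:FCondNumber} with $c_0 = 1$ gives, for a Fiedler matrix $F$ of any initial step size $t$, $\|F^{-1}\|^2 = (n-1) + 1 + \sum_{i=1}^{n-1} c_i^2 = n + \sum_{i=1}^{n-1} c_i^2$. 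Combining, $\|S^{-1}\|^2 \le n + \sum_{i=1}^{n-1} c_i^2 = \|F^{-1}\|^2$, hence $\kappa(S) = \|S\|\,\|S^{-1}\| \le \|F\|\,\|F^{-1}\| = \kappa(F)$.

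There is essentially a single idea in the argument — that $\operatorname{rank}(R) = 1$ forces $\mathbf{u}\mathbf{y}^T = -H$, so the off-diagonal block $-\frac{1}{c_0}\mathbf{u}\mathbf{y}^T - H$ of $S^{-1}$ in Lemma~\ref{lem:inverse} is exactly zero and contributes nothing to $\|S^{-1}\|$. The only point requiring care is the bookkeeping: one should record that the $1$'s of $S$ all sit in the identity blocks outside $R$ (so they contribute the constant $n$ to $\|S^{-1}\|^2$) and that every coefficient of $p(x)$ appears inside $R$, so that the precise stripe pattern of $S$ never enters the computation. This is also why the result needs no divisibility hypothesis analogous to $n = k(m+1)$ in Theorem~\ref{RectangleInequality}.
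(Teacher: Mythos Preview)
Your proof is correct, and it is more direct than the paper's. The paper deduces the corollary from Corollary~\ref{CoeffCor} (and hence ultimately from Theorem~\ref{RectangleInequality}): it observes that $\operatorname{rank}(R)=1$ is equivalent to the vanishing of all the $2\times 2$ minors $\left|\begin{smallmatrix}-c_{jk}&-c_{jk+i}\\-c_0&-c_i\end{smallmatrix}\right|$, i.e.\ to $c_ic_{jk}-c_{jk+i}=0$, which makes the left side of \eqref{eq:stripe} vanish and therefore forces $\kappa(S)\le\kappa(F)$. That route implicitly carries along the equal-stripe hypothesis $n=k(m+1)$ of Theorem~\ref{RectangleInequality} and Corollary~\ref{CoeffCor}.

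You instead go straight to Lemma~\ref{lem:inverse} and use the $2\times 2$ minors of $R$ anchored at the $(-c_0)$-entry to show $\mathbf{u}\mathbf{y}^T=-H$, so the block $-\tfrac{1}{c_0}\mathbf{u}\mathbf{y}^T-H$ in $S^{-1}$ is identically zero. This bypasses the stripe combinatorics entirely and, as you note, needs neither the divisibility condition $n=k(m+1)$ nor even the specific stripe pattern of $S$: your argument applies to any unit sparse companion matrix in the Hessenberg form of Theorem~\ref{thm:sparse} with $c_0=1$ and $\operatorname{rank}(R)=1$. The trade-off is that the paper's route exhibits the result as the extreme (rank-one) case of the quantitative inequality \eqref{eq:stripe}, while yours gives a cleaner self-contained proof and a strictly stronger statement.
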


\begin{proof}
This result follows from Corollary~\ref{CoeffCor} by observing 
that $|c_ic_{jk} - c_{jk+i}|=0$ 
for all $1\leq j\leq m$ and $1 \leq i \leq k-1$,
if and only if rank$(R)=1$.
In particular, rank$(R)=1$ if and only 
every $2\times 2$ submatrix of $R$ has determinant zero, 
which is true
if and only if $|c_ic_{jk} - c_{jk+i}|=0$ 
for $1\leq j\leq m$ and $1 \leq i \leq k-1$.
Note that we are using the fact that
$$\begin{bmatrix} -c_{jk} & -c_{jk+i} \\ 
-c_0 & -c_i \end{bmatrix}$$
is a $2 \times 2$ submatrix of $R$ and 
$c_0 = 1$.
\end{proof}

\begin{example}
\label{strippedcompanion}
Let $b,k\in \mathbb{R}$ and consider the polynomial
$p(x)=x^6+(bk^3)x^5+(bk^2)x^4+(bk^2)x^3+(bk)x^2+
kx+1$. 
If 
\[S=S_6(2,2,2)= 
\left[\begin{array}{rrrrrr}
0 & 1 & 0 & 0 & 0 & 0 \\
-bk^2 & -bk^3 & 1 & 0 & 0 & 0 \\
0 & 0 & 0 & 1 & 0 & 0 \\
-bk & -bk^2 & 0 & 0 & 1 & 0 \\
0 & 0 & 0 & 0 & 0 & 1 \\
-1 & -k & 0 & 0 & 0 & 0 
\end{array}\right]\]
and $F$ is
any Fiedler companion matrix for $p(x)$, then 
\[\left(\frac{\kappa(F)}{\kappa(S)}\right)^2=\frac{b^2k^6 + b^2k^4 + b^2k^4 + b^2k^2  + k^2 + 6}{b^2k^4 + b^2k^2 +k^2 +6}
.\]
In this case, for sufficiently large $k$, 
\[\frac{\kappa(F)}{\kappa(S)}\approx k\]
demonstrating a significantly better condition number for $S$ compared to any Fiedler companion matrix.
\end{example}

As shown in Corollary \ref{BThm}, if the rank
of the submatrix $R$ in the striped companion
matrix $S$ has ${\rm rank}(R) = 1$, then the 
inequality $\kappa(S) \leq \kappa(F)$
holds for any Fiedler companion matrix $F$.
Note that in the striped companion matrix given in
Example \ref{strippedcompanion}, the corresponding 
submatrix $R$ has rank one.  Observe also that
we can write $p(x)$ has 
$$p(x) = q(x)+ (bk)x^2q(x) + (bk^2)x^4q(x) + x^6
~~\mbox{with $q(x) = 1+kx$}.$$

\noindent
This generalizes: if the matrix $S$ in Corollary~\ref{BThm} has ${\rm rank}(R)=1$, then $p(x)= x^n+q(x)f(x)$ for some polynomial $q(x)$ with $\deg(q(x))=m$ and $\deg(f(x))=n-m-1.$ 
Moreover, Corollary~\ref{BThm} can be improved by giving an estimate on
$\frac{\kappa(F)}{\kappa(S)}$ for any
Fiedler companion matrix $F$.

\begin{theorem}
Suppose $n=k(m+1)$ and
$p(x) = q(x) + b_1x^kq(x) + b_2x^{2k}q(x) + \cdots + b_mx^{mk}q(x) + x^{(m+1)k}$ with
\begin{equation*}
    q(x) = a_{k-1}x^{k-1} + a_{k-2}x^{k-2} + \cdots + a_1x + 1.
\end{equation*}
Let $S = S_n(k,k,\ldots,k)$ and $F$ be any
Fiedler companion matrix to $p(x)$.
If $(b_1^2 + \cdots + b_{m}^2)$ is sufficiently large, then 
\begin{equation*}
\left(\frac{\kappa(F)}{\kappa(S)}\right)^2 
    \approx (a_1^2 + \cdots + a_{k-1}^2+1),
\end{equation*}
or if $(a_1^2 + \cdots + a_{k-1}^2)$ is sufficiently large, then 
\begin{equation*}
\left(\frac{\kappa(F)}{\kappa(S)}\right)^2 
    \approx (1+b_1^2 + \cdots + b_{m}^2).
\end{equation*}
\end{theorem}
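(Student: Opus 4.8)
The plan is to compute $\|S^{-1}\|$ and $\|F^{-1}\|$ explicitly in terms of the $a_i$ and $b_j$, using the norm formulas already established in the proof of Theorem~\ref{RectangleInequality}, and then pass to the relevant limits. First I would read off the coefficients of $p(x)$. Since $\deg q \leq k-1 < k$, the shifted copies $b_j x^{jk} q(x)$ for $0 \leq j \leq m$, under the conventions $b_0 = 1$ and $a_0 = 1$, have pairwise disjoint supports, lying in degrees $jk, jk+1, \dots, jk+k-1$; together they cover degrees $0$ through $(m+1)k - 1 = n-1$, and the remaining term $x^{(m+1)k} = x^n$ is the monic leading term. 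Hence the coefficient of $x^{jk+i}$ in $p(x)$ is exactly $b_j a_i$ for $0 \leq j \leq m$ and $0 \leq i \leq k-1$; in the notation of the earlier sections this reads $c_{jk+i} = b_j a_i$, and in particular $c_0 = b_0 a_0 = 1$, so the hypotheses of Theorem~\ref{RectangleInequality} and Corollary~\ref{BThm} are met.

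Next I would substitute these coefficients into the two formulas obtained in the proof of Theorem~\ref{RectangleInequality}. Set $A := a_1^2 + \cdots + a_{k-1}^2$ and $B := b_1^2 + \cdots + b_m^2$. Since $c_i = a_i$ for $1 \leq i \leq k-1$ and $c_{jk} = b_j$ for $1 \leq j \leq m$, we get $\sum_{i=1}^{k-1} |c_i|^2 = A$ and $\sum_{j=1}^m |c_{jk}|^2 = B$. The crucial point is that $c_i c_{jk} - c_{jk+i} = a_i b_j - b_j a_i = 0$ for all $1 \leq j \leq m$ and $1 \leq i \leq k-1$ --- equivalently, the block $R$ of $S$ has rank one, exactly the situation of Corollary~\ref{BThm} --- so the double sum of cross terms in $\|S^{-1}\|^2$ vanishes, giving $\|S^{-1}\|^2 = n + A + B$. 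For $F$, the same computation (via Theorem~\ref{thm:FCondNumber}) yields $\|F^{-1}\|^2 = n + A + B + \sum_{j=1}^m \sum_{i=1}^{k-1} |b_j a_i|^2 = n + A + B + AB$, since the double sum factors as $(\sum_j b_j^2)(\sum_i a_i^2) = AB$.

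Since $S$ and $F$ are both unit sparse companion matrices to $p(x)$, Remark~\ref{rem:same} gives $\|S\| = \|F\|$, hence $(\kappa(F)/\kappa(S))^2 = \|F^{-1}\|^2 / \|S^{-1}\|^2 = (n + A + B + AB)/(n + A + B)$. Writing the numerator as $B(A+1) + (n+A)$ and the denominator as $B + (n+A)$ shows the ratio tends to $A + 1 = a_1^2 + \cdots + a_{k-1}^2 + 1$ as $B \to \infty$ with $n$ and $A$ held fixed; by the symmetric rewriting, numerator $= A(B+1) + (n+B)$ and denominator $= A + (n+B)$, the ratio tends to $B + 1 = 1 + b_1^2 + \cdots + b_m^2$ as $A \to \infty$. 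These are exactly the two claimed approximations, with ``$(b_1^2 + \cdots + b_m^2)$ sufficiently large'' meaning that $(n+A)/B$ is small, and symmetrically for the other case.

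There is no genuinely hard step: the content is the bookkeeping that identifies $c_{jk+i} = b_j a_i$ and the rank-one cancellation $c_i c_{jk} = c_{jk+i}$, after which the statement is an elementary limit. The only mild subtlety is confirming that the supports of the shifted copies of $q$ are disjoint and that the leading term $x^n$ does not overlap any $c_{jk+i}$; this is precisely where $n = k(m+1)$ together with $\deg q \leq k-1$ is used.
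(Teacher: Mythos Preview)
Your proof is correct and follows essentially the same approach as the paper: identify the coefficients $c_{jk+i}=b_ja_i$, use the rank-one cancellation to get $\|S^{-1}\|^2=n+A+B$ and $\|F^{-1}\|^2=n+A+B+AB$ (the paper cites Lemma~\ref{lem:inverse} and Theorem~\ref{thm:FCondNumber} directly rather than the formulas in the proof of Theorem~\ref{RectangleInequality}, but these are the same computations), invoke Remark~\ref{rem:same} to cancel $\|S\|=\|F\|$, and read off the limits. Your write-up is in fact more explicit about the coefficient bookkeeping and the limiting argument than the paper's terse version.
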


\begin{proof}
By Remark~\ref{rem:same},
$\frac{\kappa(F)}{\kappa(S)} =\frac{||F^{-1}||}{||S^{-1}||}$.
By Lemma~\ref{lem:inverse}, $$||S^{-1}||^2=a_1^2+\cdots +a_{k-1}^2+b_1^2+\cdots+b_m^2+n.$$
By Theorem~\ref{thm:FCondNumber} we can determine that
$$||F^{-1}||^2=(1+b_1^2+\cdots+b_{m}^2)(a_1^2+\cdots+a_{k-1}^2)+(b_1^2+\cdots+b_{m}^2)+n.$$
Therefore,
\begin{equation*}
\left[\frac{\kappa(F)}{\kappa(S)}\right]^2 = 
\frac{(1+b_1^2 + \cdots + b_{m}^2)(a_1^2 + \cdots + a_{k-1}^2)+(b_1^2 + \cdots + b_{m}^2) +n}{(a_1^2 + \cdots +a_{k-1}^2) + (b_1^2 + \cdots + b_{m}^2) + n}
\end{equation*}
and the result follows.
\end{proof}


\section{Generalized 
companion matrices: a case study}\label{sec:gen}

In the previous sections, we
focused on the condition numbers
of unit sparse companion matrices.  In
this section, we initiate an
investigation into the condition numbers of a 
family of matrices that are not
companion matrices, but have properties 
similar to companion matrices. To
date, there appears to be little work done 
on this approach, so the 
work in this section can be seen as 
providing a proof-of-concept for future projects.
These results can also be viewed in the broader context
of developing
the properties of generalized companion
matrices (e.g., see \cite{EKSV,GSSV}).
Roughly speaking, 
given a polynomial $p(x) = x^n + c_{n-1}x^{n-1}
+ \cdots + c_1x_1 + c_0$, 
a generalized companion matrix $A$ is a matrix
whose entries are polynomials 
in the $c_0,\ldots,c_n$ and whose 
characteristic polynomial is $p(x)$.
See \cite{GSSV} for more explicit detail. 

Instead of considering the general
case, we focus on a particular family of matrices
and their condition numbers. This case study shows that the condition numbers
can improve on 
those of
Frobenius (or Fiedler) companion matrices under some
extra hypotheses.  

We now define our special family.  Let 
$p(x) = x^n + c_{n-1}x^{n-1} + \cdots + c_1x + c_0$
be a polynomial over $\mathbb{R}$ with $n \geq 2$
and let $a \in \mathbb{R}$ be any real number.   
Fix an integer 
$\ell \in \{3,\ldots,n-2\}$ and let
\begin{eqnarray*}
{\bf a}^T &=& (-c_{n-1},-c_{n-2},\ldots,-c_{\ell +1}) ~~\mbox{and} 
~~{\bf b}^T  =(-c_{\ell-2},-c_{\ell-3},\ldots,-c_1).
\end{eqnarray*}
Then let 
\renewcommand{\arraystretch}{1.5}
\begin{equation}\label{eq:M}
M_n(a,\ell)= \left[
\begin{array}{c|c|c|c}
	  {\bf a} & I_{n-\ell-1} & O & O\\[3pt] \hline 
	-c_{\ell}+a & \multirow{2}{*}{$W$}  & \multirow{2}{*}{$I_2$} & \multirow{2}{*}{$O$} \\[3pt]
    -c_{\ell-1}+ac_{n-1} &  &  & \\[3pt] 
    \hline
     {\bf b} & O & O & I_{\ell-2} \\[3pt] 
    \hline
    -c_0 & O & O & O \\
\end{array}
\right].
\end{equation}
where $W$ is a $2 \times (n-\ell-1)$ matrix having $W_{2,1} =-a$
and zeroes in every other entry.  

Informally, the matrix $M_n(a,\ell)$
is constructed by starting with the Frobenius companion matrix
which has all the coefficents of $p(x)$ in the first
column.  Then we fix a row  that is neither the top row
nor one of the bottom two rows (this corresponds
to picking the $\ell$), and then adding $a$ to $c_{\ell}$
in the $(n-\ell)$-th row, and $-a$ in the column to the right
and one below.  We then also add $ac_{n-1}$ to the
first  entry in the $(n-\ell+1)$-th row.  Note that when
$a =0$, $M_n(0,\ell)$ is equivalent to
the Frobenius companion matrix.  We can
thus view $M_n(a,\ell)$ as a perturbation of the
Frobenius companion matrix when $a\neq 0$.  As an example,
the matrix $M_7(a,4)$ is given in 
Figure \ref{fig:matrixM7(a,4)}.
\begin{figure}
     $$\left[\begin{array}{rrrrrrr}
     -c_6  &1&0&0&0&0&0\\
     -c_5  &0&1&0&0&0&0\\
     -c_4+a  &0&0&1&0&0&0\\
     -c_3+ac_6  &-a&0&0&1&0&0\\
     -c_2  &0&0&0&0&1&0\\
     -c_1  &0&0&0&0&0&1\\
     -c_0  &0&0&0&0&0&0\\
     \end{array}
     \right]
     $$
    \caption{The matrix $M_7(a,4)$}
    \label{fig:matrixM7(a,4)}
\end{figure}

We wish to compare the condition number of
$M_n(a,\ell)$ with the Frobenius (and Fiedler) companion
matrices.  In some cases our new
matrix $M_n(a,\ell)$ can provide us with a smaller 
condition number.  The next lemma gives the inverse of 
$M_n(a,\ell)$ and shows that the characteristic polynomial of
$M_n(a,\ell)$ is $p(x)$.

\begin{lemma}\label{propertiesofM}
Let $p(x) = x^n + c_{n-1}x^{n-1} + c_{n-2}x^{n-2} + \cdots + c_{1}x + c_0$ be a polynomial over $\mathbb{R}$, with $n\geq 2$ and $c_0 \neq 0$.  Let $a \in \mathbb{R}$ and 
$\ell \in \{3,\ldots,n-2\}$, and let $M=M_n(a,\ell)$ be constructed
from $p(x)$ as above.  Then
\begin{enumerate}
\item[$(i)$] the characteristic polynomial of $M$
is $p(x)$, and
\item[$(ii)$] if $c_0 \neq 0$, then 
\renewcommand{\arraystretch}{1.5}
$$
M^{-1}= \frac{1}{c_0}\left[
\begin{array}{c|c|c|c}
	   {\bf 0}^T & {\bf 0}^T & {\bf 0}^T & -1 \\ [3pt] \hline
      c_0I_{n-\ell} & O & O & {\bf a} \\ [3pt] \hline
      \multirow{2}{*}{$-c_0W$} & \multirow{2}{*}{$c_0I_2$} & \multirow{2}{*}{$O$} & -c_{\ell} + a \\
            &        &   &  -c_{\ell-1} \\ [3pt] \hline
        O & O & c_0I_{\ell-2} & {\bf b} \\
\end{array}
\right].
$$
\end{enumerate}
\end{lemma}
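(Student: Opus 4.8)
The plan is to exhibit $M=M_n(a,\ell)$ as a conjugate of the unperturbed matrix $F:=M_n(0,\ell)$, which is (equivalent to) the Frobenius companion matrix of $p(x)$: it has $-c_{n-1},\ldots,-c_0$ down its first column and ones on its first superdiagonal, so it has characteristic polynomial $p(x)$ and a well-known inverse. Writing $\mathbf e_i$ for the standard basis vectors and $r:=n-\ell+1$, I would set $P:=I_n-a\,\mathbf e_r\mathbf e_1^T$. Since the hypothesis $\ell\le n-2$ forces $r\ge 3$, the matrix $\mathbf e_r\mathbf e_1^T$ squares to zero, so $P$ is invertible with $P^{-1}=I_n+a\,\mathbf e_r\mathbf e_1^T$. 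The assertion is that $M=PFP^{-1}$; granting this, part $(i)$ is immediate, since similar matrices share a characteristic polynomial.

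To verify the conjugation identity, expand $PFP^{-1}=F-a\,\mathbf e_r\mathbf e_1^TF+aF\mathbf e_r\mathbf e_1^T-a^2\,\mathbf e_r(\mathbf e_1^TF\mathbf e_r)\mathbf e_1^T$. The first row of $F$ is $-c_{n-1}\mathbf e_1^T+\mathbf e_2^T$ and $F\mathbf e_r=\mathbf e_{r-1}$ (valid as $r\ge 2$), so the two linear-in-$a$ terms contribute exactly the three entries in which $M$ differs from $F$: namely $+a$ in position $(n-\ell,1)$, $-a$ in position $(n-\ell+1,2)$, and $+ac_{n-1}$ in position $(n-\ell+1,1)$. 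The quadratic term vanishes because $\mathbf e_1^TF\mathbf e_r=\mathbf e_1^T\mathbf e_{r-1}=0$, the last equality using $r-1=n-\ell\ge 2$. This is exactly where $3\le\ell\le n-2$ is used; the same bounds keep all the blocks $\mathbf a$, $\mathbf b$, $I_{n-\ell-1}$, $I_2$, $I_{\ell-2}$ in the statement non-degenerate. Identifying the right-hand side with the block form of $M$ in \eqref{eq:M} then finishes $(i)$.

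For $(ii)$ the same conjugation gives $M^{-1}=PF^{-1}P^{-1}$ whenever $c_0\ne 0$. From the explicit inverse of $F$ — first subdiagonal equal to all ones, all other entries zero except a last column equal to $-\tfrac1{c_0}(1,c_{n-1},c_{n-2},\ldots,c_1)^T$, which one reads off from Lemma~\ref{lem:inverse} via equivalence — a short computation gives $M^{-1}=F^{-1}+a\,\mathbf e_{r+1}\mathbf e_1^T+\tfrac{a}{c_0}\,\mathbf e_r\mathbf e_n^T$, the cross term again dropping because $\mathbf e_1^T\mathbf e_{r+1}=0$. Hence $M^{-1}$ coincides with $F^{-1}$ except that its $(n-\ell+2,1)$ entry becomes $a$ and its $(n-\ell+1,n)$ entry becomes $\tfrac{-c_\ell+a}{c_0}$. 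The remaining task — and the only real obstacle, which is pure bookkeeping — is to check that repartitioning these entries along the block structure of $M$ reproduces the displayed matrix: the subdiagonal ones of $F^{-1}$ split as the three identity blocks; the last column of $F^{-1}$ splits as the stated final column $\tfrac1{c_0}(-1,\mathbf a^T,-c_\ell+a,-c_{\ell-1},\mathbf b^T)^T$; and the single extra entry $a$ at $(n-\ell+2,1)$ is exactly what the block $-c_0W$ contributes, since $W$ has $W_{2,1}=-a$ as its only nonzero entry. An alternative that avoids conjugation is to prove $(ii)$ by directly multiplying the block forms of $M$ and of the claimed inverse and checking the product is $I_n$, and to prove $(i)$ by cofactor expansion of $\det(xI-M)$ down the first column; the conjugation route is preferable because it settles both parts at once.
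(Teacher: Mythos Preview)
Your proof is correct and takes a genuinely different route from the paper. The paper establishes $(i)$ by exploiting the multilinearity of the determinant in rows $n-\ell$ and $n-\ell+1$ of $xI_n-M$, tracking the four correction terms that arise and showing they cancel, leaving the characteristic polynomial of the Frobenius matrix; for $(ii)$ it simply asserts that direct multiplication verifies the stated inverse. Your approach instead recognises $M$ as an elementary conjugation $PFP^{-1}$ of the Frobenius matrix by $P=I_n-a\,\mathbf e_r\mathbf e_1^T$, which settles both parts simultaneously and with almost no computation: similarity preserves the characteristic polynomial, and $M^{-1}=PF^{-1}P^{-1}$ is obtained by the same rank-one expansion. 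This is more conceptual---it explains \emph{why} the perturbation is spectrum-preserving rather than verifying it term by term---and it makes transparent exactly how the hypothesis $3\le\ell\le n-2$ enters (to kill the quadratic cross-terms and keep the blocks non-degenerate). The paper's approach, by contrast, is more hands-on and would generalise more readily to perturbations that are not conjugations. Your entrywise description of $M^{-1}$ agrees with the displayed block form once one notes that the block labelled $c_0I_{n-\ell}$ should have size $n-\ell-1$ to match the column width of $-c_0W$; your computation in fact confirms the intended content of that block.
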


\begin{proof}
$(i)$ 
We employ the fact that the determinant of a matrix is a linear function of its rows. 
In particular, if 
$M=M_n(a,\ell)$, we observe that row $n-\ell$ of 
$xI_n-M$ can be written as $\mathbf{u}+a\mathbf{v}$ for some vectors $\mathbf{u}$ and $\mathbf{v}$ such that $\mathbf{u}$ is not a function of $a$. Row $n-\ell+1$ of $xI_n-M$ can also be written in a similar manner. Let $k=n-\ell.$
Thus
applying linearity to row $k$ gives us
\begin{eqnarray}\label{klinearstep}
&\det(xI_n-M)=\det\left(xI_n-\left[
\begin{tiny}\begin{array}{c|c|c|c}
	  {\bf a} & I_{n-\ell-1} & O & O\\[3pt] \hline 
	-c_{\ell} &\multirow{2}{*}{W}  &\multirow{2}{*}{$I_2$}  &\multirow{2}{*}{$O$} \\[3pt]
    -c_{\ell-1}+ac_{n-1} &  &  & \\[3pt] 
    \hline
     {\bf b} & O & O & I_{\ell-2} \\[3pt] 
    \hline
    -c_0 & O & O & O \\
\end{array}\end{tiny}
\right]\right) + a(-1)x^{\ell}. \end{eqnarray}
Note that the term $a(-1)x^{\ell}$ in 
\eqref{klinearstep} comes from
computing the determinant of
the matrix $A'$ formed by replacing the $k$-th row
of the matrix $xI_n -M$ with the row
$\begin{bmatrix} -a & 0 & \cdots
&0\end{bmatrix}$.  Doing a row expansion along
the $k$-th row of $A'$, the determinant of $A'$ is
$(-1)^{k+1}(-a){\rm det}(A'')$ where $A''$ is 
a block lower diagonal matrix with diagonal blocks
$D_1$ and $D_2$.  Furthermore, 
$D_1$ is a $(k-1) \times (k-1)$ lower triangular 
matrix with $-1$ on all the diagonal entries, and
$D_2$ is a $\ell \times \ell$ upper 
triangular matrix with $x$ on all the
diagonal entries.  So ${\rm det}(A'') = 
(-1)^{k-1}x^{\ell}$, and hence  ${\rm det}(A') =
(-1)^{k+1}(-a) (-1)^{k-1}x^{\ell}= 
(-a)x^{\ell}$, as desired.

We now apply 
linearity to row
$k+1$ in the matrix
that appears on the 
right-hand side of 
\eqref{klinearstep};  in particular, a similar argument shows that the right-hand side \eqref{klinearstep} is
equal to 
\begin{multline}\label{k+1linear}
\det\left(xI-\left[\begin{tiny}
\begin{array}{c|c|c|c}%
	  {\bf a} & I_{k-1} & O & O\\[3pt] \hline 
	-c_{\ell} & \multirow{2}{*}{$O$}  & \multirow{2}{*}{$I_2$}  & \multirow{2}{*}{$O$} \\[3pt]
    -c_{\ell-1} &  &  & \\[3pt] 
    \hline
     {\bf b} & O & O & I_{\ell-2} \\[3pt] 
    \hline
    -c_0 & O & O & O \\
\end{array}\end{tiny}
\right]\right) + a(-1)x^{\ell} +ac_{n-1}(-1)x^{\ell-1}+ a(x+c_{n-1})x^{\ell-1}.
\end{multline}
Note that
the first summand in 
\eqref{k+1linear} is the characteristic
polynomial of a Frobenius companion matrix of $p(x)$, and hence is  $p(x)$.  
Thus, \eqref{k+1linear} reduces to
$$p(x) + a(-1)x^{\ell} +ac_{n-1}(-1)x^{\ell-1}+ a(x+c_{n-1})x^{\ell-1} =p(x).$$
$(ii)$ A direct multiplication will show that the given matrix
is the inverse $M$.
\end{proof}

Because both $M_n(a,\ell)$ and its inverse are known,
we are able to compute its condition number.
In the next lemma, instead of providing the 
general formula, we compute the condition
number under the extra assumption that $c_0 =1$ in the
polynomial $p(x)$.

\begin{lemma}\label{specialcondnumb}
Let $p(x) = x^n + c_{n-1}x^{n-1} + c_{n-2}x^{n-2} + \cdots + c_{1}x + c_0$ be a polynomial over $\mathbb{R}$, with $n\geq 2$, and suppose that $c_0 = 1$.  Let $a \in \mathbb{R}$ and 
$\ell \in \{3,\ldots,n-2\}$, and let $M = M_n(a,\ell)$.
Then
$$\kappa(M)^2
= \left(v +a^2 + (a-c_{\ell})^2+(ac_{n-1}-c_{\ell-1})^2\right)
\left(v+a^2 + (a-c_{\ell})^2+c_{\ell-1}^2+1\right)$$
with $$v = n - c_{\ell-1}^2-c_{\ell}^2 + \sum_{i=1}^{n-1} c_i^2.$$
\end{lemma}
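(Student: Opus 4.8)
The plan is to compute $\kappa(M)^2 = \|M\|^2 \cdot \|M^{-1}\|^2$ directly from the explicit forms of $M = M_n(a,\ell)$ in \eqref{eq:M} and $M^{-1}$ given in part $(ii)$ of Lemma~\ref{propertiesofM}, specializing to the case $c_0 = 1$. First I would tally the squared Frobenius norm of $M$. Reading off \eqref{eq:M}: the column $\mathbf{a}$ contributes $c_{n-1}^2 + \cdots + c_{\ell+1}^2$; the two perturbed entries contribute $(a - c_\ell)^2$ and $(ac_{n-1} - c_{\ell-1})^2$; the entry $-a$ in the block $W$ contributes $a^2$; the column $\mathbf{b}$ contributes $c_{\ell-2}^2 + \cdots + c_1^2$; the entry $-c_0 = -1$ contributes $1$; and the three identity blocks $I_{n-\ell-1}$, $I_2$, $I_{\ell-2}$ together contribute $(n-\ell-1) + 2 + (\ell-2) = n - 1$ ones. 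Summing gives $\|M\|^2 = (n-1) + 1 + a^2 + (a-c_\ell)^2 + (ac_{n-1}-c_{\ell-1})^2 + \sum_{i \ne \ell-1,\ell} c_i^2 = v + a^2 + (a-c_\ell)^2 + (ac_{n-1}-c_{\ell-1})^2$, where $v = n - c_{\ell-1}^2 - c_\ell^2 + \sum_{i=1}^{n-1} c_i^2$ collects the $n-1$ from the identities, the $+1$ from $-c_0$, and the surviving coefficient squares (note $\sum_{i=1}^{n-1} c_i^2$ minus $c_{\ell-1}^2$ and $c_\ell^2$ is exactly $\sum_{i\ne \ell-1,\ell}c_i^2$, and the extra $n$ absorbs the $n-1$ and the $1$).

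Next I would do the same bookkeeping for $M^{-1}$. With $c_0 = 1$ the scalar $\tfrac{1}{c_0}$ out front is $1$, so from the displayed block form: the top-right entry $-1$ contributes $1$; the block $c_0 I_{n-\ell} = I_{n-\ell}$ contributes $n-\ell$ ones; the column-vector $\mathbf{a}$ (sitting in the rightmost column of the second block row) contributes $c_{n-1}^2 + \cdots + c_{\ell+1}^2$; the block $-c_0 W = -W$ contributes $a^2$ from its single nonzero entry; the block $c_0 I_2 = I_2$ contributes $2$ ones; the two scalar entries $-c_\ell + a$ and $-c_{\ell-1}$ contribute $(a-c_\ell)^2$ and $c_{\ell-1}^2$; the block $c_0 I_{\ell-2} = I_{\ell-2}$ contributes $\ell-2$ ones; and the column $\mathbf{b}$ contributes $c_{\ell-2}^2 + \cdots + c_1^2$. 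The identity blocks total $(n-\ell) + 2 + (\ell-2) = n$ ones, so $\|M^{-1}\|^2 = n + 1 + a^2 + (a-c_\ell)^2 + c_{\ell-1}^2 + \sum_{i\ne\ell-1,\ell} c_i^2$. Rewriting using $v$: since $v = n - c_{\ell-1}^2 - c_\ell^2 + \sum_{i=1}^{n-1}c_i^2 = n + \sum_{i\ne\ell-1,\ell}c_i^2$, we get $\|M^{-1}\|^2 = v + a^2 + (a-c_\ell)^2 + c_{\ell-1}^2 + 1$, exactly the second factor in the claimed formula.

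Multiplying the two expressions yields $\kappa(M)^2 = \bigl(v + a^2 + (a-c_\ell)^2 + (ac_{n-1}-c_{\ell-1})^2\bigr)\bigl(v + a^2 + (a-c_\ell)^2 + c_{\ell-1}^2 + 1\bigr)$, which is the statement. The argument is essentially pure entry-counting, so there is no deep obstacle; the one place to be careful is matching the ranges of the index sets $\{1,\ldots,n-1\}\setminus\{\ell-1,\ell\}$ against the actual entries of $\mathbf{a}$ and $\mathbf{b}$ in both $M$ and $M^{-1}$ — one must confirm that $\mathbf{a}^T = (-c_{n-1},\ldots,-c_{\ell+1})$ and $\mathbf{b}^T = (-c_{\ell-2},\ldots,-c_1)$ together account for precisely the coefficients other than $c_0$, $c_{\ell-1}$, $c_\ell$, so that the two perturbed entries $-c_\ell + a$, $-c_{\ell-1}$ (in $M$) and $-c_\ell+a$, $-c_{\ell-1}$ (in $M^{-1}$) are not double-counted and $c_0 = 1$ is correctly slotted in. Once the hypothesis $\ell \in \{3,\ldots,n-2\}$ is used to guarantee all the index ranges are nonempty and disjoint, the computation closes.
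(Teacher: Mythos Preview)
The paper offers no proof of this lemma; it is left implicitly as the direct computation of $\|M\|^2$ and $\|M^{-1}\|^2$ from the explicit block forms of $M$ in \eqref{eq:M} and of $M^{-1}$ in Lemma~\ref{propertiesofM}(ii), specialized to $c_0=1$. Your entry-by-entry tally is precisely that computation and recovers the stated formula.
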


The next result illustrates the desired proof-of-concept.
In particular, the result shows that in special cases, the condition number of the matrix
$M_n(a,\ell)$, which has properties similar to
a companion matrix, has a condition
number smaller than \emph{any} Fielder companion matrix.   
Although the scope of this result is limited, it does
suggest that generalized companion matrices, and
in particular perturbations of the Frobenius companion 
matrix, can provide
better condition numbers in some cases.

\begin{theorem}
Let $n \geq 2$, and fix  $\ell \in \{3,\ldots,n-2\}$
and  $t \in \mathbb{R}$.  Set
$$p(x) = x^n + tx^{n-1}+tx^{\ell} + t^2x^{\ell-1}+1.$$
Let $M = M_n(t,\ell)$. 
Then, for
any Fieldler companion matrix $F$ of $p(x)$,
$$\frac{\kappa(F)^2}{\kappa(M)^2} = 
\frac{(n+2t^2+t^4)^2}{(n+2t^2)(n+1+2t^2+t^4)}.$$
In particular, for $t$ for sufficiently 
large, $\frac{\kappa(F)}{\kappa(M)} \approx
\frac{1}{\sqrt{2}}t$.
\end{theorem}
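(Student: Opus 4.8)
The plan is to evaluate both $\kappa(M)^2$ and $\kappa(F)^2$ by specializing the formulas already proved, and then to compare leading terms. First I would read off the coefficients of the given polynomial: in
\[ p(x) = x^n + tx^{n-1} + tx^{\ell} + t^2x^{\ell-1} + 1 \]
we have $c_{n-1} = t$, $c_{\ell} = t$, $c_{\ell-1} = t^2$, $c_0 = 1$, and $c_i = 0$ for all other $i$. Since $\ell \in \{3,\dots,n-2\}$, the indices $n-1$, $\ell$, $\ell-1$ are pairwise distinct and lie strictly between $0$ and $n$; in particular $\sum_{i=1}^{n-1} c_i^2 = t^2 + t^2 + t^4 = 2t^2 + t^4$.

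Next I would apply Lemma~\ref{specialcondnumb} with $a = t$ (valid since $c_0 = 1 \neq 0$). The quantity $v$ in that lemma becomes
\[ v = n - c_{\ell-1}^2 - c_{\ell}^2 + \sum_{i=1}^{n-1} c_i^2 = n - t^4 - t^2 + (2t^2 + t^4) = n + t^2 . \]
The polynomial $p(x)$ has been engineered so that the two perturbation corrections in the lemma vanish: $(a - c_{\ell})^2 = (t-t)^2 = 0$ and $(ac_{n-1} - c_{\ell-1})^2 = (t^2 - t^2)^2 = 0$. Substituting into Lemma~\ref{specialcondnumb} therefore gives
\[ \kappa(M)^2 = (v + a^2)(v + a^2 + c_{\ell-1}^2 + 1) = (n + 2t^2)(n + 1 + 2t^2 + t^4), \]
which is exactly the denominator in the statement.

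Then I would compute $\kappa(F)^2$ for an arbitrary Fiedler companion matrix $F$. The key point is that $|c_0| = 1$, so Corollary~\ref{Dopico45}(2) tells us that $\kappa(F)$ is the same for \emph{every} Fiedler companion matrix of $p(x)$ — equivalently, in Theorem~\ref{thm:FCondNumber} the step-size-dependent factor collapses to $(n-1) + 1 + \sum_{i=1}^{n-1} c_i^2$ regardless of the initial step size when $|c_0| = 1$. Thus, by Theorem~\ref{thm:FCondNumber},
\[ \|F\|^2 = (n-1) + \sum_{i=0}^{n-1} c_i^2 = n + 2t^2 + t^4 \quad\text{and}\quad \kappa(F)^2 = \|F\|^2 \bigl( n + 2t^2 + t^4 \bigr) = (n + 2t^2 + t^4)^2 . \]
Dividing the two expressions gives $\dfrac{\kappa(F)^2}{\kappa(M)^2} = \dfrac{(n + 2t^2 + t^4)^2}{(n + 2t^2)(n + 1 + 2t^2 + t^4)}$, and finally, comparing leading powers as $t \to \infty$ (numerator $\sim t^8$, denominator $\sim 2t^6$) yields $\dfrac{\kappa(F)^2}{\kappa(M)^2} \sim \dfrac{t^2}{2}$, i.e. $\dfrac{\kappa(F)}{\kappa(M)} \approx \dfrac{t}{\sqrt{2}}$.

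There is essentially no obstacle here: every ingredient (Lemma~\ref{specialcondnumb}, Theorem~\ref{thm:FCondNumber}, Corollary~\ref{Dopico45}) is already available, and the computation is pure bookkeeping. The one point worth emphasizing is \emph{why} the choice $M = M_n(t,\ell)$ is the right one, namely that the coefficients $c_{\ell} = t$ and $c_{\ell-1} = t^2 = t\,c_{n-1}$ make the perturbation terms $a - c_{\ell}$ and $ac_{n-1} - c_{\ell-1}$ vanish precisely at $a = t$; any other choice of $a$ would leave these terms in $\kappa(M)^2$ and weaken the conclusion. A second, smaller point to flag explicitly is the use of $c_0 = 1$ to guarantee that $\kappa(F)$ does not depend on which Fiedler matrix is chosen.
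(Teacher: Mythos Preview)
Your proposal is correct and follows essentially the same route as the paper: apply Lemma~\ref{specialcondnumb} with $a=t$ so that the two perturbation terms vanish, compute $\kappa(F)$ via Theorem~\ref{thm:FCondNumber} using $c_0=1$ to eliminate step-size dependence, and compare leading terms. Your write-up is in fact slightly more explicit than the paper's (you compute $v$ separately and spell out why $a=t$ is the natural choice), but the underlying argument is identical.
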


\begin{proof}
By Lemma \ref{specialcondnumb}, 
$$ 
\kappa(M)^2 = \left(1+t^2+(n-1)+a^2+(a-t)^2+(at-t^2)^2\right)
\left(1+t^2+(n-1)+a^2+(a-t)^2+t^4+1\right).$$
Setting $a=t$ gives 
$\kappa(M)^2 = (n+2t^2)(n+1+2t^2+t^4).$
We use Theorem \ref{thm:FCondNumber} to compute
$\kappa(F)^2$.  Note that since $c_0 = 1$, $\kappa(F)$ is independent of the initial step size of $F$.  Hence
\begin{eqnarray*}
\kappa(F) &=& ((n-1)+1+t^4+t^2+t^2) =  (n+2t^2+t^4).
\end{eqnarray*}
Thus we have
$$\frac{\kappa(F)^2}{\kappa(M)^2} = 
\frac{(n+2t^2+t^4)^2}{(n+2t^2)(n+1+2t^2+t^4)}.$$
The limit of the right hand side 
is $\frac{t^2}{2}$ as $t \rightarrow \infty$, which implies the final statement.
\end{proof}

  The following result
gives  another case where we can make
a matrix with smaller condition number
than any other Fielder companion matrix,
providing additional evidence that 
generalized companion matrices may be of interest.

\begin{theorem} 
Let $n \geq 2$, and fix $\ell \in \{3,\ldots,n-2\}$.
Let
$p(x) = x^n + c_{n-1}x^{n-1} + \cdots + c_1x + c_0$
with 
$c_0 = 1$, and
$(c_{\ell}c_{n-1})^2 < 2c_{\ell-1}c_{\ell}c_{n-1}-1$.
Let $M = M_n(c_\ell,\ell)$. 
Then $\kappa(M)<\kappa(F)$ for
every Fieldler companion matrix $F$ of $p(x)$.
\end{theorem}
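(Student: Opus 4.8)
The plan is to reduce everything to the explicit formulas already proved and then finish with a one-line inequality manipulation. First I would note that since $|c_0| = 1$, Theorem~\ref{thm:FCondNumber} makes $\kappa(F)$ the same for every Fiedler companion matrix $F$ (independent of the initial step size), so the statement is well-posed; moreover it gives $\|F\|^2 = \|F^{-1}\|^2 = N$, where $N := n + \sum_{i=1}^{n-1} c_i^2$, hence $\kappa(F)^2 = N^2$. Next I would apply Lemma~\ref{specialcondnumb} with $a = c_\ell$: the terms $(a - c_\ell)^2$ vanish, $a^2 = c_\ell^2$, and $(ac_{n-1} - c_{\ell-1})^2 = (c_\ell c_{n-1} - c_{\ell-1})^2$. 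Writing $A := v + c_\ell^2 = n - c_{\ell-1}^2 + \sum_{i=1}^{n-1} c_i^2 = N - c_{\ell-1}^2$, this yields
$$\kappa(M)^2 = \bigl(A + (c_\ell c_{n-1} - c_{\ell-1})^2\bigr)\bigl(A + c_{\ell-1}^2 + 1\bigr).$$

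Then I would translate the hypothesis into a usable form. Put $B := c_{\ell-1}^2$ and $C := (c_\ell c_{n-1} - c_{\ell-1})^2$, so that $A + B = N$ and $C - B = (c_\ell c_{n-1})^2 - 2c_{\ell-1}c_\ell c_{n-1}$. The assumption $(c_\ell c_{n-1})^2 < 2c_{\ell-1}c_\ell c_{n-1} - 1$ is therefore exactly the clean statement $B - C > 1$. It now suffices to show $(A + C)(A + B + 1) < (A + B)^2$. Writing $A + C = (A + B) - (B - C)$ gives
$$(A + C)(A + B + 1) = (A + B)^2 + (A + B)\bigl(1 - (B - C)\bigr) - (B - C),$$
and since $A + B = N \geq n > 0$ while $B - C > 1$, both correction terms are strictly negative. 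Hence $\kappa(M)^2 < N^2 = \kappa(F)^2$, and taking square roots completes the proof.

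I do not anticipate a genuine obstacle here; the argument is bookkeeping plus an elementary estimate. The only points requiring care are: substituting $a = c_\ell$ correctly into Lemma~\ref{specialcondnumb} and verifying the identity $v + c_\ell^2 + c_{\ell-1}^2 = N$; recognizing that the somewhat opaque hypothesis $(c_\ell c_{n-1})^2 < 2c_{\ell-1}c_\ell c_{n-1} - 1$ is precisely $B - C > 1$, which is what forces the two correction terms to be negative; and noting up front that $\kappa(F)$ is well-defined for all Fiedler matrices because $|c_0| = 1$.
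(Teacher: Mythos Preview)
Your proposal is correct and follows essentially the same route as the paper: both substitute $a=c_\ell$ into Lemma~\ref{specialcondnumb}, set $w=N=A+B$ and $y=B-C$, and reduce to the inequality $(w-y)(w+1)<w^2$ when $y>1$. Your version is in fact slightly more explicit in justifying this last inequality (you expand and identify the two negative correction terms), whereas the paper simply asserts it.
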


\begin{proof}
Let $v = n - c_{\ell}^2 - c_{\ell-1}^2 + \sum_{i=1}^{n-1}c_i^2$.  
Because $c_0 = 1$,  by Theorem \ref{thm:FCondNumber} all
Fielder companion matrices $F$ have
condition number
$$\kappa(F)  = (v+c_{\ell}^2 + c_{\ell-1}^2).$$
By Lemma \ref{specialcondnumb}, 
with $a  = c_{\ell}$, 
\begin{eqnarray*}
\kappa(M)^2 &=& (v+c_{\ell}^2+(c_{\ell}c_{n-1}-c_{\ell-1})^2)
(v+c_{\ell}^2+c^2_{\ell-1}+1) \\
& = &(v+c_{\ell}^2+c^2_{\ell-1} + ((c_{\ell}c_{n-1})^2
-2c_{\ell-1}c_{\ell}c_{n-1}))(v+c_{\ell}^2+
c^2_{\ell-1}+1).
\end{eqnarray*}
If we set $w = (v+c_{\ell}^2 + c_{\ell-1}^2)$, then
$\kappa(M)^2=(w-y)(w+1)$
with $y=
2c_{\ell-1}c_{\ell}c_{n-1}-(c_{\ell}c_{n-1})^2.$
But $y>1$ by hypothesis, thus 
$\kappa(M)^2<w^2=\kappa(F)^2$.
%
\end{proof}


\end{document}